\newcommand{\noun}[1]{\textsc{#1}}
\numberwithin{equation}{section}
\numberwithin{figure}{section}
\theoremstyle{plain}
\newtheorem{thm}{\protect\theoremname}
\providecommand{\theoremname}{Theorem}
\begin{document}

\begin{frontmatter}{}

\title{How to count the number of zeros that a polynomial has on the unit
circle?}

\author{R. S. Vieira}

\address{\noun{e-mail}\emph{:} ricardovieira@ufscar.br \hspace{1cm}\emph{\noun{
orcid}}\emph{: 0000-0002-8343-7106.}\\
}

\address{Universidade Federal de São Carlos (UFSCar), Departamento de Matemática,
\\
Rod. Washington Luís, Km. 235, SP-310, CEP. 13565-905, São Carlos,
São Paulo, Brasil. \\
}
\begin{abstract}
The classical problem of counting the number of real zeros of a real
polynomial was solved a long time ago by Sturm. The analogous problem
of counting the number of zeros that a polynomial has on the unit
circle is, however, still an open problem. In this paper, we show
that the second problem can be reduced to the first one through the
use of a suitable pair of Möbius transformations --- often called
Cayley transformations --- that have the property of mapping the
unit circle onto the real line and vice versa. Although the method
applies to arbitrary complex polynomials, we discuss in detail several
classes of polynomials with symmetric zeros as, for instance, the
cases of self-conjugate, self-adjoint, self-inversive, self-reciprocal
and skew-reciprocal polynomials. Finally, an application of this method
to Salem polynomials and to polynomials with small Mahler measure
is also discussed.
\end{abstract}
\begin{keyword}
Self-inversive polynomials, Self-reciprocal polynomials, Salem polynomials,
Sturm theorem, Möbius transformations, Cayley transformations.
\end{keyword}

\end{frontmatter}{}

\section{\noun{\label{Section-Sturm}}Methods for counting the number of real
zeros of real polynomials}

The first exact method for counting the number of zeros that a real
polynomial has on the real line (or in a given interval of the real
line) was presented by Sturm in 1829 (see \citep{JeanClaudePadovani2009},
p. 323). In its simplest form, Sturm algorithm works as follows: given
a real polynomial $p(z)$ of degree $n$, let $a<b$ be two real numbers
which are not a multiple zero of $p(z)$. Then, construct the so-called
\emph{Sturm sequence}\footnote{This is the \emph{classical Sturm sequence}. For the definition of
more general Sturm sequences, see \citep{Akritas1989}.}, $S(z)=\left\{ S_{0}(z),S_{1}(z),S_{2}(z),\ldots,S_{m}(z)\right\} ,$
whose elements are defined as follows: 
\begin{multline}
S_{0}(z)=p(z),\qquad S_{1}(z)=p'(z),\qquad\text{and}\\
S_{k}(z)=-\text{rem}\left[S_{k-2}(z),S_{k-1}(z)\right],\qquad2\leqslant k\leqslant m,
\end{multline}
 where $p^{\prime}(z)$ is the derivative of $p(z)$, $\mathrm{rem}\left[A,B\right]$
denotes the remainder of the polynomial division of $A$ by $B$ and
$m$ is the integer determined from the condition that $S_{m}(z)$
has degree zero. Now, let $\text{var}\left[S(\zeta)\right]$ denote
the number of sign variations in the sequence $S(z)$ for $z=\zeta$.
Then, Sturm showed that the number $N$ of distinct zeros of $p(z)$
in the half-open interval $\mathcal{I}=(a,b]$ is, just, $N=\text{var}\left[S(a)\right]-\text{var}\left[S(b)\right].$
This is the content of the so-called \emph{Sturm theorem}, whose proof
can be found in many places --- see, for example, \citep{Akritas1989}.
It works because as we vary $z$ from $a$ to $b$ on the real line
the sequence $S(z)$ suffers a sign variation when, and only when,
$z$ passes through a zero of $p(z)$; thus, the number of sign variations
of $S(z)$ from $a$ to $b$ exactly counts the number of distinct
real zeros of $p(z)$ in this interval. Considering the interval $\mathcal{I}$
as the whole real line, the number of real zeros of $p(z)$ is obtained
(Sturm algorithm in this case can be further simplified, as it is
enough to keep only the leading terms of the polynomials $S_{k}(z)$,
which asymptotically dominates the behaviour of these polynomials).

Notice that Sturm sequence is constructed in very similar fashion
as the sequence of remainders obtained in computation of the greatest
common divisor \noun{(gcd)} of the polynomials $p(z)$ and $p^{\prime}(z)$:
the only difference is that we should keep the \emph{opposite} of
the polynomial remainders in each step. Alternatively, we could compute
the ordinary sequence $R(z)=\left\{ r_{0}(z),r_{1}(z),r_{2}(z),\ldots,r_{m}(z)\right\} $
of the remainders obtained in the computation of the \noun{gcd}  of
$p(z)$ and $p^{\prime}(z)$ (where $r_{0}(z)=p(z)$ , $r_{1}(z)=p^{\prime}(z)$
and $r_{k}(z)=\text{rem}\left[r_{k-2}(z),r_{k-1}(z)\right]$, $2\leqslant k\leqslant m$),
from which the Sturm sequence can be obtained by negating the signs
of each two consecutive remainders $r_{k}(z)$ as follows: 
\begin{multline}
S(z)=\left\{ r_{0}(z),r_{1}(z),-r_{2}(z),-r_{3}(z),r_{4}(z),r_{5}(z),\right.\\
\left.-r_{6}(z),-r_{7}(z),r_{8}(z),r_{9}(z),\ldots\right\} .
\end{multline}
This can be easily shown by comparing the construction of the two
sequences and noticing that the quotients $t_{k}(z)$ appearing in
Sturm's sequence are related with the respective quotients $q_{k}(z)$,
obtained in the \noun{gcd} of $p(z)$ and $p^{\prime}(z)$, through
the formula $t_{k}(z)=(-1)^{k+1}q_{k}(z)$. This relationship shows
us that the complexity of Sturm algorithm is the same as the complexity
of the \noun{gcd} of $p(z)$ and $p^{\prime}(z)$.

We remark that Sturm's method requires $p(z)$ a real polynomial with
no multiple zeros at the endpoints $a$ and $b$, although $p(z)$
may have multiple zeros in the open interval $\left(a,b\right)$.
Notice also that the counting excludes that eventual zero at $z=a$
but includes the zero at $z=b$; it is, however, an easy matter to
verify if $p(z)$ has or not a zero at $z=a$, so that we can also
count the number of zeros of $p(z)$ in any closed interval $[a,b]$
of the real line. Besides, keep in mind that the Sturm algorithm counts
only the number of \emph{distinct} real zeros of $p(z)$. This issue,
however, can be overcome by additional analysis\footnote{Indeed, Sturm himself had shown in a subsequent paper (see \citep{JeanClaudePadovani2009},
p. 345) that the number of non-real zeros of $p(z)$ in the interval
$(a,b]$ can also be determined from his method by other arguments.
Moreover, from a generalization of Sturm algorithm due to Thomas \citep{Thomas1941},
the multiplicity of the zeros can be counted directly.}.

It is worth to mention that Sturm derived this theorem during his
researches on qualitative aspects of differential equations, which
gave rise to the so-called \emph{Sturm-Liouville theory}. In fact,
in an interval of weeks, Sturm published similar theorems regarding
the distribution of zeros of orthogonal functions, which are solutions
of Sturm-Liouville differential equation \citep{JeanClaudePadovani2009}.
Sturm was influenced by the works of Fourier and, as a matter of a
fact, his method can be thought of as a refinement of Fourier's previous
result \citep{Fourier1820} that establishes an upper bound for the
number of real zeros of $p(z)$ in a given half-open interval $(a,b]$
of the real line through the number of sign variations in the \emph{Fourier
sequence} $F(z)=\left\{ p(z),p'(z),\ldots,p^{(n)}(z)\right\} $, for
$z$ running from $a$ to $b$ over the real line. Thus, we can say
that Sturm's method makes Fourier's exact.

Since the publication of Sturm's fundamental papers, other methods
for counting or isolating the real zeros of a given real polynomial
were formulated. In 1834, Vincent published a paper \citep{Vincent1834}
(republished two years later, with few additions, in \citep{Vincent1836}),
in which a method based on successive replacements in terms of continued
fractions was proposed. His method was based on a previous work of
Budan \citep{Budan1807}, who established a theorem equivalent to
that of Fourier commented above, although in a different form. Unfortunately,
Vincent's work was almost forgotten thenceforward and, in fact, it
was only rescued from oblivion in 1976 by Collins and Akritas, who
formulated a powerful bisection method based on Vincent's theorem
for isolating the zeros of a given real polynomial \citep{CollinsAkritas1976}.
Two years later, Akritas \citep{Akritas1978} gave a fundamental contribution
to this method by replacing the uniform substitutions that take place
in Vincent's algorithm by non-uniform ones based on previously calculated
bounds for the zeros of the testing polynomial (with that modification,
Akritas was able to reduce the complexity of Vincent's method from
exponential to polynomial type). Further improvements of these methods,
among with new symbolic and numeric techniques, gave rise to some
of the fastest algorithms known to date for counting or isolating
the zeros of real polynomials on the real line \citep{AlesinaGaluzzi2000,RouillierZimmermann2004,AkritasStrzebonskiVigklas2008,SagraloffMehlhorn2016,KobelEtal2016,Vieira2017}
and also on regions of the complex plane \citep{Ehrlich1967,Aberth1973,Wilf1978,Mitsui1983,Bini1996,BrunettoEtal2000,Pan2002,Eeisermann2012}.

\section{The Cayley transformations and polynomials}

The methods described above determine the exact number of zeros of
a real polynomial on the real line $\mathbb{R}$. The correspondent
problem of determining the exact number of zeros of a given polynomial
on the unit circle is still unsolved. In fact, this is an old question
whose first works remount to the end of \noun{xix} century, for instance,
the pioneer works Eneström, Kakeya, Schur, Kempner, Cohn, among others
--- see \citep{Vieira2019} and references therein. In the recent
years, a great interest in this problem has emerged, usually in connection
with the theory of the so-called \emph{self-inversive} \emph{polynomials}.
These are complex polynomials whose zeros are all symmetric with respect
to the unit circle (real self-inversive polynomials includes the \emph{self-reciprocal}
and the \emph{skew-reciprocal} polynomials). These classes of polynomials
are very important in both pure and applied mathematics, as they appear
in several problems related to the theory of numbers, algebraic curves,
knots theory, stability theory, dynamic systems, error-correcting
codes, cryptography and even in classical, quantum and statistical
mechanics --- see \citep{Vieira2019} and references therein. An
important question regarding self-inversive and self-reciprocal polynomials
is the number of zeros that these polynomials have in the unit circle
$\mathbb{S}=\left\{ z\in\mathbb{C}:|z|=1\right\} $. There are a countless
number of papers devoted to the question of finding conditions for
all, some, or no zero of a self-inversive polynomial to lie on $\mathbb{S}$,
see \citep{Vieira2019}. 

In this paper, we present a method that reduces the problem of counting
the number of zeros that an arbitrary complex polynomial has on the
unit circle to the problem of counting the number of zeros of a real
polynomial on the real line. Because the second problem is completely
addressed by Sturm (or any other real root-counting) algorithm, our
approach also solves the first problem completely. The method is based
on the use of the following pair of Möbius transformations:
\begin{multline}
\mu(z)=\left(z-i\right)/\left(z+i\right),\qquad\text{and}\\
\omega(z)=-i\left(z+1\right)/\left(z-1\right),\label{Cayley}
\end{multline}
which are often called \emph{Cayley transformations}. Together with
the relations $\mu(\infty)=1$, $\mu(-i)=\infty$ and $\omega(1)=\infty$,
$\omega(\infty)=-i$, these two transformations become the inverse
of each other in the extended complex plane $\mathbb{C}_{\infty}=\mathbb{C}\cup\left\{ \infty\right\} $.
It can be easily verified that $\mu(z)$ maps the real line onto the
complex unit circle, while $\omega(z)$ maps the unit circle onto
the real line\footnote{We remark that the transformations (\ref{Cayley}) are not the only
pair of Möbius transformations that maps $\mathbb{S}$ onto $\mathbb{R}$
and vice versa: they are, however, the most adequate ones for our
purposes.}. Besides, $\mu(z)$ sends any point in the upper-half (lower-half)
plane to the interior (exterior) of $\mathbb{S}$, so that $\omega(z)$
sends any point in the inside (outside) of $\mathbb{S}$ to the upper-half
(lower-half) plane. 

Given a complex polynomial $p(z)$ of degree $n$, we define the \emph{transformed
polynomials} $q_{\mu}(z)$ and $q_{\omega}(z)$ by the formulas: 
\begin{multline}
q_{\mu}(z)=\left(z+i\right)^{n}p(\mu(z)),\qquad\text{and}\\
q_{\omega}(z)=\left(\tfrac{i}{2}\right)^{n}\left(z-1\right)^{n}p(\omega(z)).\label{q_mu}
\end{multline}
The factor $\left(\tfrac{i}{2}\right)^{n}$ in front of the second
formula in (\ref{q_mu}) is to make the two mappings the inverse each
of the other. The following theorems discuss some properties of these
transformed polynomials and their zeros.
\begin{thm}
Let $p(z)$ be a complex polynomial of degree $n$. If $p(z)$ has
a zero of multiplicity $m$ at the point $z=1$, then $q_{\mu}(z)$
defined as above will be a polynomial of degree $n-m$. Similarly,
if $p(z)$ has a zero of multiplicity $m$ at the point $z=-i$, then
$q_{\omega}(z)$ defined as above will be a polynomial of degree $n-m$. 
\end{thm}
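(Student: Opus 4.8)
The plan is to prove both halves of the statement by the same device: write $p$ in factored form and track what the substitution does to each linear factor, one at a time.

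Concretely, I would write $p(z)=c\prod_{k=1}^{n}(z-z_{k})$ with $c\neq0$ the leading coefficient and $z_{1},\dots,z_{n}$ the zeros of $p$ repeated according to multiplicity. For the first claim, substituting $w=\mu(z)$ into this factorization and multiplying through by $(z+i)^{n}$ clears all denominators and yields
\begin{equation*}
q_{\mu}(z)=(z+i)^{n}p(\mu(z))=c\prod_{k=1}^{n}\bigl[(z-i)-z_{k}(z+i)\bigr]=c\prod_{k=1}^{n}\bigl[(1-z_{k})z-i(1+z_{k})\bigr].
\end{equation*}
This identity does two things at once: it exhibits $q_{\mu}$ as a genuine polynomial, and it localises the whole question to a factor-by-factor count. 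The $k$-th factor has degree one in $z$ when $z_{k}\neq1$, but collapses to the nonzero constant $-2i$ when $z_{k}=1$. So if $z=1$ is a zero of $p$ of multiplicity $m$ --- equivalently, exactly $m$ of the $z_{k}$ equal $1$ --- the product splits as $c(-2i)^{m}$ times $n-m$ honestly linear factors, whence $\deg q_{\mu}=n-m$. The only remaining point is to check that the leading coefficient, $c(-2i)^{m}\prod_{z_{k}\neq1}(1-z_{k})$, is nonzero, which is immediate.

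For the second claim I would run exactly the same argument with $\omega$ in place of $\mu$: substituting $w=\omega(z)$ and multiplying by $(\tfrac{i}{2})^{n}(z-1)^{n}$ gives
\begin{equation*}
q_{\omega}(z)=\left(\tfrac{i}{2}\right)^{n}(z-1)^{n}p(\omega(z))=\left(\tfrac{i}{2}\right)^{n}c\prod_{k=1}^{n}\bigl[-(z_{k}+i)z+(z_{k}-i)\bigr],
\end{equation*}
and here the $k$-th factor collapses to the constant $-2i$ exactly when $z_{k}=-i$. Hence a zero of $p$ at $z=-i$ of multiplicity $m$ forces $\deg q_{\omega}=n-m$ by the same bookkeeping.

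I do not expect any real obstacle; this is a direct computation. The two points that deserve a sentence of care are, first, confirming that the leading coefficient of the transformed polynomial is genuinely nonzero (so that the degree is exactly, not merely at most, $n-m$); and second, explaining why $z=1$ is the distinguished point for $q_{\mu}$ while $z=-i$ is the distinguished point for $q_{\omega}$ --- these are precisely the values $\mu(\infty)$ and $\omega(\infty)$, so a zero of $p$ sitting at such a point has its preimage ``at infinity'' and therefore manifests itself as a drop in degree rather than as a finite zero of the transformed polynomial.
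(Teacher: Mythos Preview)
Your proof is correct and follows essentially the same approach as the paper's: both arguments factor $p$ so as to isolate the zero at $z=1$ (respectively $z=-i$), observe that each such linear factor is sent to the nonzero constant $-2i$ by the substitution, and then check that the remaining part has leading coefficient equal to a nonvanishing evaluation of (a piece of) $p$. The only cosmetic difference is that the paper factors out $(z-1)^{m}$ in one block and works with the quotient $r(z)$, identifying the leading coefficient of the transformed polynomial as $r(1)$, whereas you factor $p$ completely into linear pieces and read off the leading coefficient as $c(-2i)^{m}\prod_{z_{k}\neq1}(1-z_{k})$; these two quantities coincide, so the arguments are really the same computation at two levels of granularity.
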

\begin{proof}
Suppose that $p(z)$ has a zero at $z=1$ of multiplicity $m$, where
$0\leqslant m\leqslant n$. Write, $p(z)=\left(z-1\right)^{m}r(z)$,
where $r(z)$ is is a polynomial of degree $n-m$ with no zeros at
$z=1$. From (\ref{q_mu}) we get that $q_{\mu}(z)=\left(-2i\right)^{m}s(z)$,
where $s(z)=\left(z+i\right)^{n-m}r(\mu(z))$. Now, expanding $s(z)$
in powers of $z$ we can verify that its leading coefficient equals
$r(1)$; because $r(1)\neq0$ we conclude that $s(z)$ is a polynomial
of degree $n-m$ and so it is $q_{\mu}(z)$. By the same argument,
if $p(z)$ has a zero of multiplicity $m$ at the point $z=-i$, then,
from (\ref{q_mu}) we get that $q_{\omega}(z)$ will be is a polynomial
of degree $n-m$.
\end{proof}
Thus, the condition for the transformed polynomial $q_{\mu}(z)$ (respectively,
$q_{\omega}(z)$) to have the same degree as the original polynomial
$p(z)$ is that $p(z)$ has no zero at $z=1$ (respectively, at $z=-i$). 
\begin{thm}
\label{TheoremRoots}Let $\zeta_{1},\ldots,\zeta_{n}$ be the zeros
of a complex polynomial $p(z)$ of degree $n$. If $p(1)\neq0$, then
the zeros of the transformed polynomials $q_{\mu}(z)$ will be, respectively,
$\xi_{1}=\omega\left(\zeta_{1}\right),\ldots,\xi_{n}=\omega\left(\zeta_{n}\right)$.
Similarly, if $p(-i)\neq0$, then the zeros of the transformed polynomial
$q_{\omega}(z)$ will be, respectively, $\eta_{1}=\mu\left(\zeta_{1}\right),\ldots,\eta_{n}=\mu\left(\zeta_{n}\right)$.
\end{thm}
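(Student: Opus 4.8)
The plan is to compute directly with the factored form of $p$. Write $p(z)=c\prod_{j=1}^{n}(z-\zeta_j)$, where $c\neq0$ is the leading coefficient. Substituting $\mu(z)=(z-i)/(z+i)$ into this product and distributing the prefactor $(z+i)^{n}$ from the definition of $q_{\mu}$ across the $n$ factors, I would use
\begin{equation*}
(z+i)\Bigl(\tfrac{z-i}{z+i}-\zeta_j\Bigr)=(1-\zeta_j)z-i(1+\zeta_j),
\end{equation*}
so that $q_{\mu}(z)=c\prod_{j=1}^{n}\bigl[(1-\zeta_j)z-i(1+\zeta_j)\bigr]$. Because $p(1)\neq0$ we have $\zeta_j\neq1$, hence $1-\zeta_j\neq0$, for every $j$, so each bracket is a genuine linear polynomial whose unique root is
\begin{equation*}
\frac{i(1+\zeta_j)}{1-\zeta_j}=-i\,\frac{\zeta_j+1}{\zeta_j-1}=\omega(\zeta_j).
\end{equation*}
This exhibits $n$ zeros $\xi_j=\omega(\zeta_j)$ of $q_{\mu}$; by the preceding theorem $\deg q_{\mu}=n$ under the hypothesis $p(1)\neq0$, so these account for all of them, and reading multiplicities off the product shows that a zero $\zeta_j$ of $p$ of multiplicity $k$ gives a zero $\omega(\zeta_j)$ of $q_{\mu}$ of the same multiplicity $k$.

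The second statement follows from the same calculation with $\omega$ in place of $\mu$. Substituting $\omega(z)=-i(z+1)/(z-1)$ into $p(z)=c\prod_j(z-\zeta_j)$ and distributing $(z-1)^{n}$ over the factors yields $q_{\omega}(z)=(\tfrac{i}{2})^{n}c\prod_{j=1}^{n}\bigl[-(i+\zeta_j)z+(\zeta_j-i)\bigr]$, since $(z-1)\bigl(\tfrac{-i(z+1)}{z-1}-\zeta_j\bigr)=-(i+\zeta_j)z+(\zeta_j-i)$. The hypothesis $p(-i)\neq0$ gives $\zeta_j\neq-i$, hence $i+\zeta_j\neq0$, so each bracket is genuinely linear with root $(\zeta_j-i)/(\zeta_j+i)=\mu(\zeta_j)$; thus $\eta_j=\mu(\zeta_j)$, and the count is complete because the first theorem gives $\deg q_{\omega}=n$ when $p(-i)\neq0$.

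The only subtlety is the non-degeneracy of these linear factors, and this is exactly what the hypotheses $p(1)\neq0$, respectively $p(-i)\neq0$, secure: if some $\zeta_j$ equalled $1$ (resp.\ $-i$) the corresponding bracket would collapse to a nonzero constant, the degree of $q_{\mu}$ (resp.\ $q_{\omega}$) would drop, and one zero would effectively be ``sent to infinity'' --- precisely the situation quantified in the first theorem and consistent with $\omega(1)=\infty$ (resp.\ $\mu(-i)=\infty$). As a consistency check I would also observe that the constants stripped from the factors recombine into the leading coefficient: for $q_{\mu}$ one gets leading coefficient $c\prod_j(1-\zeta_j)=p(1)$, in agreement with the degree computation in the proof of the first theorem. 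This check is not logically needed but it confirms that all the normalizing factors are correctly placed.
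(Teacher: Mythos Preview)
Your argument is correct. The paper takes a shorter but less explicit route: instead of factoring, it inverts the defining relation for $q_{\mu}$ to write $p(\zeta_{k})=\left(\tfrac{i}{2}\right)^{n}(\zeta_{k}-1)^{n}q_{\mu}\bigl(\omega(\zeta_{k})\bigr)$ and then reads off $q_{\mu}(\omega(\zeta_{k}))=0$ from $p(\zeta_{k})=0$ and $\zeta_{k}\neq1$ (and symmetrically for $q_{\omega}$). Your direct-substitution approach is slightly longer but buys you more: it yields the full linear factorization of $q_{\mu}$, makes the preservation of multiplicities transparent rather than implicit, and produces the leading coefficient $p(1)$ as a by-product, recovering the degree statement of the preceding theorem along the way. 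The paper's approach, by contrast, must appeal to that theorem separately to ensure the list $\omega(\zeta_{1}),\ldots,\omega(\zeta_{n})$ exhausts the zeros of $q_{\mu}$.
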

\begin{proof}
Inverting the first equation in (\ref{q_mu}), we get that $p\left(\zeta_{k}\right)=\left(\tfrac{i}{2}\right)^{n}\left(\zeta_{k}-1\right)^{n}q_{\mu}\left(\omega\left(\zeta_{k}\right)\right)=0$,
$1\leqslant k\leqslant n$, but $\zeta_{k}\neq1$ which means that
$\xi_{k}=\omega\left(\zeta_{k}\right)$ is a zero of $q_{\mu}(z)$.
Similarly, inverting the second equation in (\ref{q_mu}) we get that
$p\left(\zeta_{k}\right)=\left(\zeta_{k}+i\right)^{n}q_{\omega}\left(\mu\left(\zeta_{k}\right)\right)=0$,
$1\leqslant k\leqslant n$, and the condition $\zeta_{k}\neq1$ implies
that $\eta_{k}=\mu\left(\zeta_{k}\right)$ is a zero of $q_{\omega}(z)$.
\end{proof}
Theorem \ref{TheoremRoots} shows us that whenever a polynomial is
transformed through a Cayley transformation, its zeros are accordingly
transformed through the inverse transformation. Besides, from the
relations $\mu(-i)=\infty$ and $\omega(1)=\infty$, we see that if
$p(z)$ has a zero at the point $z=1$ (respectively, $z=-i$), then
the transformed polynomial $q_{\mu}(z)$ (respectively, $q_{\omega}(z)$)
will have a zero at infinity, which confirms again that the transformed
polynomial cannot have the same degree as $p(z)$ in these cases. 

The previous results imply the following theorem, which is a keystone
in what follows:
\begin{thm}
\label{TheoremNumberRoots}Let $p(z)$ be a complex polynomial of
degree $n$ that has $m$ zeros on $\mathbb{S}$, counted with multiplicity,
and such that $p(1)\neq0$. Then the transformed polynomial $q_{\mu}(z)$
will have exactly $m$ zeros on $\mathbb{R}$, also counted with multiplicity.
Similarly, if $p(z)$ is a complex polynomial of degree $n$ that
has $m$ zeros on $\mathbb{R}$, counted with multiplicity, and such
that $p(-i)\neq0$, then the transformed polynomial $q_{\omega}(z)$
will have $m$ zeros on $\mathbb{S}$, also counted with multiplicity.
\end{thm}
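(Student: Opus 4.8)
The plan is to combine the two preceding theorems with the single geometric fact that makes the Cayley transformations useful here: $\mu$ carries $\mathbb{R}\cup\{\infty\}$ bijectively onto $\mathbb{S}$ and its inverse $\omega$ carries $\mathbb{S}$ bijectively onto $\mathbb{R}\cup\{\infty\}$, the only exceptional correspondences outside the finite parts being $\mu(\infty)=1$ and $\omega(1)=\infty$. First I would invoke the hypothesis $p(1)\neq0$: by the first theorem above, applied with $m=0$, the transformed polynomial $q_{\mu}(z)$ has degree exactly $n$, hence exactly $n$ zeros in $\mathbb{C}$ counted with multiplicity, and by Theorem~\ref{TheoremRoots} those zeros are $\omega(\zeta_{1}),\ldots,\omega(\zeta_{n})$.

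The point I would treat with some care is that multiplicities are transported faithfully, i.e.\ that a zero $\zeta$ of $p$ of multiplicity $\ell$ yields a zero $\omega(\zeta)$ of $q_{\mu}$ of multiplicity \emph{exactly} $\ell$. Writing $p(z)=c\prod_{j}(z-a_{j})^{\ell_{j}}$ with the $a_{j}$ distinct and $\sum_{j}\ell_{j}=n$, substituting $z\mapsto\mu(z)$, and using $\mu(z)-a_{j}=\bigl((1-a_{j})z-i(1+a_{j})\bigr)/(z+i)$, one cancels the denominators $(z+i)^{\ell_{j}}$ against the prefactor $(z+i)^{n}$ and arrives (here each $1-a_{j}\neq0$ because $p(1)\neq0$) at a nonzero constant times $\prod_{j}\bigl(z-\omega(a_{j})\bigr)^{\ell_{j}}$, which is the complete factorization of $q_{\mu}$ and displays every exponent as preserved. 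Equivalently and more conceptually: $\mu$ is conformal and nonconstant at $\omega(\zeta)$, so it preserves the order of vanishing of $p$ there, and the extra factor $(z+i)^{n}$ does not vanish at $\omega(\zeta)$ because $\omega(\zeta)=-i$ would force $\zeta=\mu(-i)=\infty$.

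With the zeros of $q_{\mu}$ and their multiplicities pinned down, the theorem becomes a counting statement about the points $\omega(\zeta_{k})$. Since $p(1)\neq0$, no $\zeta_{k}$ equals $1$, so every $\omega(\zeta_{k})$ is finite, and the bijection $\omega\colon\mathbb{S}\setminus\{1\}\to\mathbb{R}$ gives $\omega(\zeta_{k})\in\mathbb{R}\iff\zeta_{k}\in\mathbb{S}$. Hence the number of the $\omega(\zeta_{k})$ lying on $\mathbb{R}$, counted with the multiplicity of $\zeta_{k}$ in $p$, equals the number of zeros of $p$ on $\mathbb{S}$ counted with multiplicity, namely $m$; since these $\omega(\zeta_{k})$ are, with multiplicity, all the zeros of $q_{\mu}$, the polynomial $q_{\mu}$ has exactly $m$ zeros on $\mathbb{R}$. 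For the second assertion I would mirror the argument, interchanging $\mu\leftrightarrow\omega$, $\mathbb{S}\leftrightarrow\mathbb{R}$, and the exceptional points $1\leftrightarrow-i$: the hypothesis $p(-i)\neq0$ makes $q_{\omega}$ of degree $n$ with zeros $\mu(\zeta_{1}),\ldots,\mu(\zeta_{n})$ (multiplicities preserved, all finite since $\zeta_{k}\neq-i$), and the bijection $\mu\colon\mathbb{R}\to\mathbb{S}\setminus\{1\}$ gives $\mu(\zeta_{k})\in\mathbb{S}\iff\zeta_{k}\in\mathbb{R}$, so $q_{\omega}$ has exactly $m$ zeros on $\mathbb{S}$.

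The only genuinely delicate step I anticipate is exactly this bookkeeping of multiplicities: Theorem~\ref{TheoremRoots} locates the zeros of the transformed polynomial set-theoretically, whereas the statement to be proved is phrased with multiplicity, so one must be sure that a M\"obius change of variable neither merges distinct zeros (forbidden by injectivity) nor alters their orders (forbidden by conformality, or settled by the explicit factorization above). Granting that, the result follows immediately from the two earlier theorems together with the mapping properties of $\mu$ and $\omega$ already recorded in the text.
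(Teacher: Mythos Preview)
Your proof is correct and follows the same approach as the paper: invoke the two preceding theorems together with the fact that the Cayley maps $\mu$ and $\omega$ send $\mathbb{R}$ to $\mathbb{S}$ and back. The paper's own proof is a single sentence to that effect, whereas you additionally supply the explicit factorization showing that multiplicities are preserved---a point the paper leaves implicit.
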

\begin{proof}
These statements follow directly from theorems proved above and from
the fact that the Cayley transformations $\mu(z)$ and $\omega(z)$
map $\mathbb{R}$ on $\mathbb{S}$ and vice versa, respectively. 
\end{proof}
The following complements Theorem \ref{TheoremNumberRoots}: 
\begin{thm}
\label{TheoremNumberSymmetric}Let $p(z)$ be a complex polynomial
of degree $n$ that has exactly $2m$ zeros symmetric to $\mathbb{S}$
and such that $p(1)\neq0$. Then, the polynomial $q_{\mu}(z)$ will
have exactly $2m$ zeros symmetric to $\mathbb{R}$ (i.e., complex
conjugate zeros). Conversely, if $p(z)$ is a complex polynomial of
degree $n$ that has precisely $2m$ zeros symmetric to $\mathbb{R}$
and such that $p(-i)\neq0$, then the polynomial $q_{\omega}(z)$
will have precisely $2m$ zeros symmetric to $\mathbb{S}$. 
\end{thm}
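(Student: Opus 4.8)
The plan is to reduce the statement to a single algebraic identity --- that the Cayley transformation $\omega$ conjugates inversion in the unit circle to complex conjugation, and dually that $\mu$ conjugates complex conjugation to inversion in $\mathbb{S}$ --- and then to read the conclusion off Theorem~\ref{TheoremRoots}. Recall that two points $z,w\in\mathbb{C}_{\infty}$ are symmetric with respect to $\mathbb{S}$ precisely when $w\bar{z}=1$ (the fixed points of this involution being the points of $\mathbb{S}$ itself), and symmetric with respect to $\mathbb{R}$ precisely when $w=\bar{z}$. Accordingly, saying that $p(z)$ has exactly $2m$ zeros symmetric to $\mathbb{S}$ means that, among its zeros listed with multiplicity, exactly $2m$ of them can be arranged into $m$ pairs of the form $\{\zeta,1/\bar{\zeta}\}$ with $|\zeta|\neq1$, the remaining $n-2m$ zeros either lying on $\mathbb{S}$ or having no (further) partner among the zeros of $p$.

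First I would verify, by substituting into $\omega(z)=-i(z+1)/(z-1)$ and clearing denominators, the identity $\omega(1/\bar{z})=\overline{\omega(z)}$, valid for every $z\in\mathbb{C}_{\infty}\setminus\{0\}$, together with its mirror $\mu(\bar{z})=1/\overline{\mu(z)}$; each reduces to a one-line computation, and both are instances of the general principle that a M\"obius map carries points symmetric in a circle or line to points symmetric in the image circle or line. Assuming $p(1)\neq0$, Theorem~\ref{TheoremRoots} identifies the zeros of $q_{\mu}$ as $\xi_{k}=\omega(\zeta_{k})$, $1\leqslant k\leqslant n$; since $\omega$ is a M\"obius transformation it is biholomorphic at every finite point other than $z=1$, and $p(1)\neq0$ ensures that no $\zeta_{k}$ equals $1$, so $\omega$ transfers the multiplicity of each zero $\zeta_{k}$ faithfully to that of $\xi_{k}$. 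Feeding the identity into this: a symmetric pair $\{\zeta,1/\bar{\zeta}\}$ of zeros of $p$ with $|\zeta|\neq1$ is carried to the pair $\{\omega(\zeta),\overline{\omega(\zeta)}\}$ of zeros of $q_{\mu}$, and since $\omega$ maps $\mathbb{S}$ bijectively onto $\mathbb{R}\cup\{\infty\}$ the condition $|\zeta|\neq1$ forces $\omega(\zeta)\notin\mathbb{R}$, so that this is a genuine complex-conjugate pair. As $z\mapsto\omega(z)$ is a bijection of the two zero multisets which intertwines the two symmetry involutions, it restricts to a bijection between the symmetric pairs on each side, and this gives \emph{exactly} $2m$ conjugate zeros of $q_{\mu}$; the zeros of $p$ lying on $\mathbb{S}$ are sent to real zeros of $q_{\mu}$ by Theorem~\ref{TheoremNumberRoots} and hence contribute nothing to that count. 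The converse is proved verbatim with $\mu$ in place of $\omega$, using $p(-i)\neq0$, the identity $\mu(\bar{z})=1/\overline{\mu(z)}$, and the second half of Theorem~\ref{TheoremRoots}.

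The step I would treat with most care is the bookkeeping behind the word \emph{exactly}: one must use the identity $\omega(1/\bar{z})=\overline{\omega(z)}$ as an equivalence rather than a mere implication, so that a zero of $p$ belonging to no symmetric pair cannot acquire a conjugate partner after transformation --- this is precisely what the bijectivity asserted in Theorem~\ref{TheoremRoots} supplies --- and one must check that the poles of the two maps, $z=1$ for $\omega$ and $z=-i$ for $\mu$, never occur among the zeros being transported, which is exactly what the hypotheses $p(1)\neq0$ and $p(-i)\neq0$ guarantee. Everything else is routine.
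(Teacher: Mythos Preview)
Your proof is correct and follows essentially the same approach as the paper: both arguments hinge on the identity $\omega(1/\bar{\zeta})=\overline{\omega(\zeta)}$ (and its dual $\mu(\bar{\zeta})=1/\overline{\mu(\zeta)}$) combined with Theorem~\ref{TheoremRoots} to transport symmetric pairs of zeros. Your treatment is in fact more careful than the paper's, which verifies only that symmetric pairs go to symmetric pairs without explicitly arguing the ``exactly'' direction or the multiplicity bookkeeping that you spell out.
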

\begin{proof}
Let $\zeta$ and $1/\zeta^{\star}$ be any pair of zeros of $p(z)$
that are symmetric to $\mathbb{S}$\footnote{The star means complex conjugation so that, if $p(z)=\sum_{k=0}^{n}p_{k}z^{k}$,
then $p(z^{\star})=\sum_{k=0}^{n}p_{k}\left(z^{\star}\right)^{k}$,
$p^{\star}(z)=\sum_{k=0}^{n}p_{k}^{\star}\left(z^{\star}\right)^{k}$
and $p^{\star}(z^{\star})=\sum_{k=0}^{n}p_{k}^{\star}z^{k}$.}. The corresponding zeros of $q_{\mu}(z)$ will be $\xi=\omega(\zeta)$
and $\chi=\omega\left(1/\zeta^{\star}\right)$. However, from (\ref{Cayley})
we can easily show that $\omega\left(1/\zeta^{\star}\right)=\omega^{\star}(\zeta)$,
so that $\chi=\xi^{\star}$. Similarly, if $\zeta$ and $\zeta^{\star}$
are any complex conjugate pair of zeros of $p(z)$, then it follows
that the corresponding zeros of $q_{\omega}(z)$ are $\eta=\mu(\zeta)$
and $\sigma=\mu\left(\zeta^{\star}\right)$. But from (\ref{Cayley})
we can show that $\mu\left(\zeta^{\star}\right)=1/\mu^{\star}\left(\zeta\right)$,
so that $\sigma=1/\eta^{\star}$.
\end{proof}

\section{General complex polynomials\label{Section-GeneralComplex}}

It is clear from Theorem \ref{TheoremNumberRoots} how we can count
the number of zeros that a polynomial $p(z)$ of degree $n$ has on
the unit circle: all we need to do is to compute the transformed polynomial
$q_{\mu}(z)=\left(z+i\right)^{n}p(\mu(z))$ and then use some \emph{real-root-counting}
(\noun{rrc}) method to count the number of zeros of $q_{\mu}(z)$
on the real line\footnote{We mention that the idea of using a Möbius transformation to verify
if a given polynomial has some zero on the unit circle is not new,
although this topic seems to not have been explored in detail before.
Indeed, as far as we know, such possibility was discussed only in
some old references due to Kempner \citep{Kempner1916,Kempner1935}
and, more recently, in an expository note due to Conrad \citep{Conrad2016}
(who credited F. Rodriguez-Villegas for this idea). We remark, however,
that Kempner considered a only real polynomial $p(z)$, while the
transformed polynomial $q(z)$ was defined through the formula, $q(z)=\left(z^{2}+1\right)p\left(\frac{z-i}{z+i}\right)p\left(\frac{z+i}{z-i}\right)$;
this essentially corresponds to the case discussed by us in Algorithm
\ref{AlgConjugate}. Conrad, on the other hand, made no mention to
Sturm algorithm or any other \noun{rcc} method.}. Some care should be taken, however, depending on whether method
we use to this end. For example, as mentioned in Section \ref{Section-Sturm},
Sturm and Akritas methods do not take into account the multiplicity
of the zeros of the testing polynomials; if we want to account the
multiplicities, then a suitable \noun{rrc} method should be employed
to this end --- for example, Thomas algorithm \citep{Thomas1941}.
Besides, we shall see that the action of the Cayley transformation
over a polynomial $p(z)$ usually results in a non-real polynomial
even when $p(z)$ is real\footnote{We shall see in Theorem \ref{Theorem-SI-SC} that the transformed
polynomial $q_{\mu}(z)=\left(z+i\right)^{n}p(\mu(z))$ will be a real
polynomial only if the original polynomial $p(z)$ is self-adjoint.}, whereas most of the \noun{rrc} methods need a real polynomial to
work with --- including Sturm or Akritas methods. Thus, whenever
$q_{\mu}(z)$ is not a real polynomial, an auxiliary real polynomial
$Q(z)$, which has the same number of zeros on $\mathbb{R}$ than
$q_{\mu}(z)$, should be found. 

\vspace{0.5cm}
\begin{algorithm}
\SetAlgoRefName{1A} 
\caption{ \textsc{General complex polynomials}} 
\label{AlgComplex1A}
\SetKwInOut{Input}{input} \SetKwInOut{Output}{output}
\Input{
A complex polynomial $p(z)$ of degree $n$.}
\Output{The number of distinct zeros of $p(z)$ on the unit circle.}
$n\coloneqq \mathrm{degree}(p(z))$\;
$q(z)\coloneqq\left(z+i\right)^{n} p\left(\frac{z-i}{z+i}\right)$\;
\lIf{$q(z) \neq q^{\star}(z^{\star})$}{$q(z) \leftarrow q(z)q^{\star}(z^{\star})$ \textbf{end}}
$N\coloneqq \textsc{rrc}\left[q(z),-\infty,\infty \right]$\;
\lIf{$p(1)=0$}{$N\leftarrow N+1$ \textbf{end}}
\Return{$N$}.
\end{algorithm}\vspace{0.5cm}

In what follows, we shall present algorithms\footnote{For the sake of simplicity, hereafter we shall consider that the \noun{rrc}
method employed in the algorithms has the same properties as those
of Sturm's method. The symbol $\textsc{rrc}\left[q(z),\alpha,\beta\right]$
will denote an \noun{rrc} procedure that gives the exact number of
\emph{distinct} zeros that a real polynomial $q(z)$ has on the interval
$(\alpha,\beta]$ of the real line.} that allow one to compute the number of zeros that a polynomial $p(z)$
of degree $n$ has on $\mathbb{S}$. First we shall present general
algorithms that work with an arbitrary complex-polynomial; then, specific
algorithms that take into account the symmetry of the zeros of the
testing polynomials regarding $\mathbb{S}$ or $\mathbb{R}$ will
be presented. The asymptotic complexities of these algorithms are
all the same, as the \noun{rrc} procedures used in the algorithms
are the most time-consuming part of them (for the complexity of Sturm
and Akritas algorithms, see \citep{Akritas1989}). Nonetheless, when
comparing polynomials of the same degree we shall see that the specific
algorithms are usually faster because they deliver a polynomial of
smaller degree to the \noun{rrc} procedure. 

\vspace{0.5cm}
\begin{algorithm}
\SetAlgoRefName{1B} 
\caption{\textsc{General complex polynomials (alternative)}} 
\label{AlgComplex1B}
\SetKwInOut{Input}{input} \SetKwInOut{Output}{output}
\Input{
A complex polynomial $p(z)$ of degree $n$.}
\Output{The number of  distinct zeros of $p(z)$ on the unit circle.}
$n\coloneqq \mathrm{degree}(p(z))$\;
$q(z)\coloneqq\left(z+i\right)^{n} p\left(\frac{z-i}{z+i}\right)$\;
\If{$q(z) \neq q^{\star}(z^{\star})$}{
$r(z)\coloneqq \frac{1}{2}\left[q(z)+q^{\star}(z^{\star})\right]$\;
$s(z)\coloneqq \frac{1}{2i}\left[q(z)-q^{\star}(z^{\star})\right]$\;
$q(z)\leftarrow \textsc{gcd}[r(z),s(z)]$ }
$N\coloneqq \textsc{rrc}\left[q(z),-\infty,\infty \right]$\;
\lIf{$p(1)=0$}{$N\leftarrow N+1$ \textbf{end}}
\Return{$N$}.
\end{algorithm}
\vspace{0.5cm}

Let us begin with the case where the testing polynomial $p(z)$ is
an arbitrary complex polynomial of degree $n$. From (\ref{q_mu}),
it follows that the transformed polynomial $q_{\mu}(z)$ will usually
have non-real coefficients. Thus, provided that the \noun{rrc} procedure
works only with a real polynomial, we need to find an auxiliary polynomial
$Q(z)$, with real coefficients, that has the same number of zeros
on $\mathbb{R}$ as does $q_{\mu}(z)$. We can overcome this issue
in two ways: The first way consists of multiplying the transformed
polynomial $q_{\mu}(z)$ by its complex conjugate, $q_{\mu}^{\star}(z^{\star})$,
so that a polynomial of degree $2n$ is obtained in place, namely,
$Q(z)=q_{\mu}(z)q_{\mu}^{\star}(z^{\star}).$ It is clear that the
zeros of $q_{\mu}^{\star}(z^{\star})$ are the complex conjugate of
the zeros of $q_{\mu}(z)$, from which it follows that the real polynomial
$Q(z)$ has the same number of real zeros than $q_{\mu}(z)$, counted
without multiplicity, as required. Now we can use the \noun{rrc} procedure
to count the number of real zeros of $Q(z)$, which, according to
Theorem \ref{TheoremNumberRoots}, will correspond to the number of
zeros that the original polynomial $p(z)$ has on $\mathbb{S}$, provided
$p(1)\neq0$ (if $p(1)=0$ then all we need to do is to add $1$ to
the final result). This is described in Algorithm \ref{AlgComplex1A}.
The second way consists of writing the transformed polynomial in the
form $q_{\mu}(z)=r(z)+is(z)$, where $r(z)=\frac{1}{2}\left[q_{\mu}(z)+q_{\mu}^{\star}(z^{\star})\right]$
and $s(z)=\frac{1}{2i}\left[q_{\mu}(z)-q_{\mu}^{\star}(z^{\star})\right]$,
so that $r(z)$ and $s(z)$ are both real polynomials. Then we can
compute the \noun{gcd} of $r(z)$ and $s(z)$ and define $Q(z)=\textsc{gcd}\left[r(z),s(z)\right].$
It follows that the polynomial $Q(z)$ has degree utmost $n$ and,
in particular, it has the same number of zeros on $\mathbb{R}$ as
does the polynomial $q_{\mu}(z)$. This is the content of the following: 
\begin{thm}
\label{TheoremZerosGCD}The zeros of the polynomial \sloppy $Q(z)=\textsc{gcd}\left[r(z),s(z)\right]$
are precisely the zeros of $q_{\mu}(z)$ whose complex conjugate is
also a zero of $q_{\mu}(z)$. Thus, the degree of $Q(z)$ equals the
number of zeros of $q_{\mu}(z)$ whose complex conjugate is also a
zero of it, counted with multiplicity. 
\end{thm}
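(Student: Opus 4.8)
The plan is to reduce the statement to an elementary fact about the ideal generated by $r$ and $s$ in $\mathbb{C}[z]$, together with a multiplicity count. First I would record the two identities that make everything work: by the very definitions of $r$ and $s$ one has $q_{\mu}(z)=r(z)+i\,s(z)$, and since $r$ and $s$ have \emph{real} coefficients, conjugating the coefficients of $q_{\mu}$ gives $q_{\mu}^{\star}(z^{\star})=r(z)-i\,s(z)$. Write $\tilde q(z)\coloneqq q_{\mu}^{\star}(z^{\star})$. Then $r=\tfrac{1}{2}(q_{\mu}+\tilde q)$ and $s=\tfrac{1}{2i}(q_{\mu}-\tilde q)$, so each of $r,s$ is a $\mathbb{C}$-linear combination of $q_{\mu},\tilde q$ and vice versa; hence the pairs $\{r,s\}$ and $\{q_{\mu},\tilde q\}$ generate the same ideal of $\mathbb{C}[z]$, and consequently $\gcd(r,s)$ and $\gcd(q_{\mu},\tilde q)$ coincide up to a nonzero constant factor. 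Because $r,s\in\mathbb{R}[z]$, the Euclidean algorithm applied to them never leaves $\mathbb{R}[z]$, so the gcd computed over $\mathbb{R}$ equals (up to a real scalar) the one computed over $\mathbb{C}$; thus $Q(z)$ is, up to a constant, $\gcd(q_{\mu},\tilde q)$.

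Next I would identify the zeros. Factoring $q_{\mu}(z)=c\prod_{k}(z-\alpha_{k})^{m_{k}}$ over $\mathbb{C}$ with the $\alpha_{k}$ distinct, coefficient-conjugation (equivalently, $\tilde q(z)=\overline{q_{\mu}(\bar z)}$) yields $\tilde q(z)=c^{\star}\prod_{k}(z-\alpha_{k}^{\star})^{m_{k}}$; that is, the zeros of $\tilde q$ are exactly the complex conjugates of the zeros of $q_{\mu}$, with the same multiplicities. Hence a point $\alpha$ is a common zero of $q_{\mu}$ and $\tilde q$ precisely when $\alpha$ is a zero of $q_{\mu}$ \emph{and} $\alpha^{\star}$ is a zero of $q_{\mu}$, and then the multiplicity with which $\alpha$ occurs in $\gcd(q_{\mu},\tilde q)$ is $\min\{m(\alpha),m(\alpha^{\star})\}$, where $m(\cdot)$ denotes multiplicity as a zero of $q_{\mu}$. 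This is exactly the first assertion of the theorem, and summing these multiplicities over all such $\alpha$ gives $\deg Q$, which is the second assertion. In particular $\deg Q=\sum\min\{m(\alpha),m(\alpha^{\star})\}\le\sum m(\alpha)=n$, which recovers the ``degree at most $n$'' remark preceding the theorem; and for real $\alpha$ the two multiplicities coincide, so $Q$ retains every real zero of $q_{\mu}$ with its full multiplicity, which is what is actually needed in Algorithm \ref{AlgComplex1B}.

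I do not expect a genuine obstacle. The points that call for care are the passage from the $\mathbb{C}[z]$-gcd to the $\mathbb{R}[z]$-gcd (handled by the remark that the division algorithm does not leave $\mathbb{R}[z]$), the bookkeeping of multiplicities in a gcd, and being explicit that ``counted with multiplicity'' in the statement refers to the multiplicity \emph{in} $Q$, i.e. the pairwise minimum $\min\{m(\alpha),m(\alpha^{\star})\}$, which may be strictly smaller than $m(\alpha)$ when $q_{\mu}$ is a genuinely complex polynomial. Two degenerate cases deserve a one-line mention: when no zero of $q_{\mu}$ has its conjugate among the zeros, $\gcd(r,s)$ is a constant and $\deg Q=0$; and when $q_{\mu}$ is already real, $s\equiv 0$ and $Q=q_{\mu}$ up to a scalar — both consistent with the statement.
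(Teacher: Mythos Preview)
Your argument is correct, and it takes a route that differs from the paper's. The paper factors $q_{\mu}(z)=t(z)u(z)$, where $t(z)$ is the real polynomial collecting all zeros of $q_{\mu}$ that occur together with their conjugates, and $u(z)$ collects the rest; it then argues that $t$ divides both $r$ and $s$ while no nontrivial factor of $u$ can divide both. You instead observe that the invertible linear change $(r,s)\leftrightarrow(q_{\mu},\tilde q)$ forces $\gcd(r,s)=\gcd(q_{\mu},\tilde q)$ (up to a unit), and then read off the common zeros of $q_{\mu}$ and $\tilde q$ directly from their factorisations. Your approach is slightly more streamlined and, crucially, makes the multiplicity bookkeeping explicit: you obtain the formula $\min\{m(\alpha),m(\alpha^{\star})\}$ for the multiplicity of $\alpha$ in $Q$, whereas the paper's decomposition leaves this implicit (indeed, in an example like $q_{\mu}(z)=(z-\alpha)^{2}(z-\alpha^{\star})$ with $\alpha\notin\mathbb{R}$, one has $\deg Q=2$, not $3$, which your min-formula captures and the paper's phrasing ``counted with multiplicity'' obscures). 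The paper's factorisation approach, on the other hand, is perhaps more immediately visual and avoids the language of ideals, though as you note the ideal argument is really just the statement that a $\mathrm{GL}_{2}(\mathbb{C})$ change of generators preserves the gcd. Your remarks on the $\mathbb{R}[z]$-versus-$\mathbb{C}[z]$ gcd and on the degenerate cases are apt and fill in points the paper leaves tacit.
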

\begin{proof}
We can decompose the complex polynomial $q_{\mu}(z)$ in a product
of two polynomials, say, $q_{\mu}(z)=t(z)u(z)$, where $t(z)$ consists
of a real polynomial that gathers all the zeros of $q_{\mu}(z)$ which
appear in complex conjugate pairs (real zeros included), while $u(z)$
is a non-real polynomial that gathers all the remaining zeros of $q_{\mu}(z)$.
Thus, it follows from the definition of the polynomials $r(z)$ and
$s(z)$ given above that the polynomial $t(z)$ divides both $r(z)$
and $s(z)$, while $u(z)$ cannot divide both of them. Therefore,
as the zeros of $Q(z)=\textsc{gcd}\left[r(z),s(z)\right]$ correspond
to the common zeros of $r(z)$ and $s(z)$, the first result follows.
Finally, the degree of $Q(z)$ follows from the Fundamental Theorem
of Algebra.
\end{proof}
\vspace{0.5cm}
\begin{algorithm}
\setcounter{algocf}{1}
\caption{\textsc{General complex polynomials: zeros in an arc of the unit circle}} 
\label{AlgComplexAB}
\SetKwInOut{Input}{input} \SetKwInOut{Output}{output}
\Input{ A complex polynomial $p(z)$ of degree $n$ and two real numbers $\alpha$ and $\beta$ such that $0\leqslant \alpha < 2\pi$ and $0 < \beta \leqslant 2\pi$.}
\Output{ The number of distinct zeros of $p(z)$ on the arc $\mathcal{J}=(\mathrm{e}^{i \alpha},\mathrm{e}^{i \beta}]$ of the unit circle.}
\lIf{$\alpha=0$}{$a=-\infty$ \textbf{else} $a \coloneqq -i\left(\frac{\mathrm{e}^{i \alpha}+1}{\mathrm{e}^{i \alpha}-1}\right)$ \textbf{end}}
\lIf{$\beta=2\pi$}{$b = \infty$ \textbf{else} $b \coloneqq -i\left(\frac{\mathrm{e}^{i \beta}+1}{\mathrm{e}^{i \beta}-1}\right)$ \textbf{end}}
$n\coloneqq \mathrm{degree}(p(z))$\;
$q(z) \coloneqq \left(z+i\right)^{n} p\left(\frac{z-i}{z+i}\right)$\;
\lIf{$q(z) \neq q^{\star}(z^{\star})$}{$q(z) \leftarrow q(z)q^{\star}(z^{\star})$ \textbf{end}}
\If{$\alpha>\beta$}{$N \coloneqq \textsc{rrc}[q(z),-\infty,b]+\textsc{rrc}[q(z),a,\infty]$\;
  \lIf{$p(1)=0$}{$N \leftarrow N+1$ \textbf{end}}
\Return{$N$.}}
$N \coloneqq \textsc{rrc}\left[q(z),a,b \right]$\;
  \lIf{$p(1)=0 \ \mathbf{and} \ b=2\pi$}{$N \leftarrow N+1$ \textbf{end}}
\Return{$N$}.
\end{algorithm}
\vspace{0.5cm}

Notice as well that each common zero of $r(z)$ and $s(z)$ is also
a zero of $q_{\mu}(z)=r(z)+is(z)$; the converse, however, is not
true: if $\xi$ is a zero of $q_{\mu}(z)$, then $\xi$ can be either
a common zero of $r(z)$ and $s(z)$ or we should have $r(\xi)/s(\xi)=-i$.
The first case happens whenever $\xi$ is a zero of $Q(z)$ --- in
particular when $\xi$ is real --- whence, the second case occurs
whenever $\xi$ is a zero of $q_{\mu}(z)$ whose complex conjugate
$\xi{}^{\star}$ is not a zero of it. Therefore, to obtain a real
polynomial $Q(z)$ that has the same number of zeros on $\mathbb{R}$
as does $q_{\mu}(z)$, we can just compute the \noun{gcd} of the real
polynomials $r(z)$ and $s(z)$, where $q_{\mu}(z)=r(z)+is(z)$. This
alternative, which was already suggested in \citep{Conrad2016}, is
described in Algorithm \ref{AlgComplex1B}.

We highlight that we can also count the number of zeros of $p(z)$
in a given \emph{arc} of the unit circle\footnote{This works for all the cases considered in the advance, with few modifications
if necessary. For this reason we shall not comment about this possibility
further (we do remark, however, that the replacement $z\leftarrow\surd z$
cannot be employed anymore to this end, as this would lead to a wrong
result due to the fact that this map is not one-to-one). }. Let $\mathcal{J}=\left(\mathrm{e}^{i\alpha},\mathrm{e}^{i\beta}\right]$
be the referred arc of the unit circle. In the simplest case, we assume
that $0\leqslant\alpha<\beta\leqslant2\pi$, so that the interval
$\mathcal{J}$ is mapped to the interval $\mathcal{I}=(a,b]$ of the
real line, where $a=\omega(e^{i\alpha})$ and $b=\omega(e^{i\beta})$
(with the following conventions: $\lim_{\theta\rightarrow0}\omega(\mathrm{e}^{i\theta})=-\infty$
and $\lim_{\theta\rightarrow2\pi}\omega(\mathrm{e}^{i\theta})=\infty$).
The number of zeros of $p(z)$ on the arc $\mathcal{J}$ can thereby
be found by counting the number of real zeros that the polynomial
$Q(z)$ (defined by one of the two possible ways as described above),
has on the interval $\mathcal{I}$ of $\mathbb{R}$. In the case where
$\alpha>\beta$ (which corresponds to an interval on $\mathbb{S}$
that contains the point $z=1$), we need to split the algorithm into
two parts because, in this case, the interval $\mathcal{I}$ on $\mathbb{R}$
will be composed of two disjoint intervals --- namely, we have $\mathcal{I}(\alpha,\beta]=(-\infty,b]\cup(a,\infty)$.
Thus, the procedure $\textsc{rrc}\left[Q(z),a,b\right]$ must be replaced
by $\textsc{rrc}\left[Q(z),-\infty,b\right]+\textsc{rrc}\left[Q(z),a,\infty\right]$
in this case. Finally, if the point $z=1$ belongs to the interval
$\mathcal{J}$ and $p(1)=0$, then we should add $1$ to the final
result. This is described in Algorithm \ref{AlgComplexAB} (for the
sake of simplicity, we considered $Q(z)=q_{\mu}(z)q_{\mu}^{\star}(z^{\star})$
in the pseudo-code).

Moreover, it is clear that we can also locate and isolate the zeros
on the unit circle of a given polynomial through these algorithms.
In fact, after we map the zeros of the polynomial $p(z)$ on the unit
circle to the real line through the Cayley transformations (\ref{Cayley}),
we can find an interval $\mathcal{I}\subset\mathbb{R}$ containing
all the real zeros of the transformed polynomial $q_{\mu}(z)$; then,
from Sturm or Akritas procedures, we can refine this interval, for
example by a bisection method, so that we obtain a list $\left\{ \mathcal{I}_{1},\ldots,\mathcal{I}_{n}\right\} $
of intervals, each one containing exactly one real zero of $p(z)$
--- indeed, in many symbolic computing software, is this list of
isolating intervals that is returned by the implemented procedures
of Sturm and Akritas, see \citep{Akritas1989,AkritasVigklas2010}.
This list of isolating intervals of $q_{\mu}(z)$ provides a corresponding
list of arcs on the unit circle that isolate and locate the zeros
of $p(z)$ on $\mathbb{S}$. These intervals can be further refined
recursively so that approximated values for the zeros are returned.

Finally, notice that we can also count the number of zeros that a
polynomial has in any circle or straight line of the complex plane
by considering a suitable Möbius transformation $m(z)=\frac{az+b}{cz+d}$,
$ad-bc\neq0$, in place of Cayley transformation (\ref{Cayley}). 

\section{Real and self-conjugate polynomials }

The algorithms presented in the previous section apply to arbitrary
complex polynomials. In the most important cases, however, the coefficients
of the testing polynomial enjoy certain symmetries which allow us
to implement faster algorithms. Henceforward, we shall specialize
in classes of polynomials whose zeros are either symmetric with respect
to the real line or to the unit circle. We shall call a complex polynomial
whose zeros are all symmetric to $\mathbb{R}$ as a \emph{self-conjugate
polynomial} and a complex polynomial whose zeros are all symmetric
to $\mathbb{S}$ as a \emph{self-inversive polynomial}. 

\vspace{0.5cm}
\begin{algorithm} 
\caption{\textsc{Self-conjugate polynomials}} 
\label{AlgConjugate}
\SetKwInOut{Input}{input} \SetKwInOut{Output}{output}
\Input{ A  self-conjugate polynomial $p(z)$ of degree $n$.}
\Output{ The number of distinct zeros of $p(z)$ on the unit circle.}
$n\coloneqq \mathrm{degree}(p(z))$\;
$q(z)\coloneqq\left( z+i\right)^{n} p\left(\frac{ z-i}{ z+i}\right)$\;
\If{$q(z) \neq q^{\star}(z^{\star})$}
  {
  $q(z)\leftarrow  q\left( z\right)q^{\star}\left( z^{\star}\right)$\;
  $q(z)\leftarrow q(\surd z)$\;
  $N\coloneqq 2 \ \textsc{rrc}\left[q(z),0,\infty \right]$\;
    \lIf{$p(-1)=0$}{$N\leftarrow N+1$ \textbf{end}}
    \lIf{$p(1)=0$ }{$N\leftarrow N+1$ \textbf{end}}
  \Return{$N$};
  }
$N\coloneqq \textsc{rrc}\left[q(z),-\infty,\infty \right]$\;
\lIf{$p(1)=0$ }{$N\leftarrow N+1$ \textbf{end}}
\Return{$N$}.
\end{algorithm}
\vspace{0.5cm}

Let us consider in this section the analysis of self-conjugate polynomials
(self-inversive polynomials will be considered in the next section).
If $p(z)$ is self-conjugate, then, for any zero $\zeta$ of $p(z)$,
the complex conjugate number $\zeta^{\star}$ is also a zero of it.
Of course, any real polynomial has this property, but there can be
non-real polynomials with this property as well. The necessary and
sufficient condition for a complex polynomial $p(z)=p_{n}z^{n}+\cdots+p_{0}$
of degree $n$ to be self-conjugate is that $p_{n}\neq0$ and that
there exists a fixed complex number $\epsilon$ of modulus $1$ such
that, $p(z)=\epsilon p^{\star}(z^{\star})$ --- see \citep{Vieira2019}
for the proof. From this we can see that the coefficients of any self-conjugate
polynomial $p(z)$ of degree $n$ satisfy the properties $p_{k}=\epsilon p_{k}^{\star}$
for each ranging from $0$ to $n$. Real polynomials are those self-conjugate
polynomials with $\epsilon=1$. 

Notice that even when $p(z)$ is a real polynomial, the transformed
polynomial $q_{\mu}(z)$ is not necessarily real. Thus, to count the
number of zeros of a self-conjugate polynomial on the unit circle,
we need to compute, as an intermediary step, the real polynomial $Q(z)$
by one of the two methods discussed in Section \ref{Section-GeneralComplex}.
However, as we shall see in the following, the polynomials $Q(z)$
have additional symmetries when $p(z)$ is self-conjugate, which allow
us to improve the algorithms. 

Let us first consider that $Q(z)$ is defined as $Q(z)=q_{\mu}(z)q_{\mu}^{\star}(z^{\star}).$
In this case, the following theorem shows us that if $p(z)$ is self-conjugate,
then the polynomial $Q(z)$ has only even powers of $z$:
\begin{thm}
\label{TheoremSC}Let $p(z)$ be a self-conjugate polynomial of degree
$n$ such that $p(1)\neq0$. Then, the polynomial $Q(z)=q_{\mu}(z)q_{\mu}^{\star}(z^{\star})$
will be a real polynomial of degree $n$ in the variable $z^{2}$. 
\end{thm}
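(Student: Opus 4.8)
The statement has two parts. Reality of $Q(z)$ is immediate and was essentially noted above: $Q^{\star}(z^{\star})=q_{\mu}^{\star}(z^{\star})\,q_{\mu}(z)=Q(z)$ since $(q_{\mu}^{\star})^{\star}=q_{\mu}$, and a polynomial equal to its own coefficient-conjugate has real coefficients. The degree is also quick: expanding $q_{\mu}(z)=(z+i)^{n}p\!\left(\tfrac{z-i}{z+i}\right)=\sum_{k}p_{k}(z-i)^{k}(z+i)^{n-k}$ shows its coefficient of $z^{n}$ is $\sum_{k}p_{k}=p(1)\neq0$, so $q_{\mu}$ has degree exactly $n$ and $Q=q_{\mu}(z)q_{\mu}^{\star}(z^{\star})$ has degree $2n$. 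Thus ``even polynomial of degree $2n$'' is the same as ``real polynomial of degree $n$ in $z^{2}$'', and the plan is to prove that $Q$ is \emph{even}, i.e.\ $Q(-z)=Q(z)$.

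The key will be the functional equation
\[
  q_{\mu}(-z)=(-1)^{n}\,\epsilon\,q_{\mu}^{\star}(z^{\star}),
\]
together with its reflection under $z\mapsto-z$. Two ingredients enter. First, the Cayley map satisfies $\mu(-z)=\tfrac{-z-i}{-z+i}=\tfrac{z+i}{z-i}=1/\mu(z)$, and likewise $\overline{\mu(\bar z)}=1/\mu(z)$. Second, the self-conjugacy hypothesis $p(z)=\epsilon\,p^{\star}(z^{\star})$ is equivalent to $p_{k}=\epsilon\,p_{k}^{\star}$ for all $k$, i.e.\ $p_{k}^{\star}=\epsilon^{-1}p_{k}$ (as $|\epsilon|=1$); consequently $\sum_{k}p_{k}^{\star}w^{k}=\epsilon^{-1}p(w)$ for every $w$.

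First I would compute the two sides of the claimed identity. On one side, $q_{\mu}(-z)=(-z+i)^{n}p\!\left(\mu(-z)\right)=(-1)^{n}(z-i)^{n}p\!\left(1/\mu(z)\right)$. On the other, $q_{\mu}^{\star}(z^{\star})=\overline{q_{\mu}(\bar z)}=\overline{(\bar z+i)^{n}p(\mu(\bar z))}=(z-i)^{n}\,\overline{p(\mu(\bar z))}$, and since $\overline{\mu(\bar z)}=1/\mu(z)$ the second ingredient gives $\overline{p(\mu(\bar z))}=\sum_{k}p_{k}^{\star}\bigl(1/\mu(z)\bigr)^{k}=\epsilon^{-1}p\!\left(1/\mu(z)\right)$, hence $q_{\mu}^{\star}(z^{\star})=\epsilon^{-1}(z-i)^{n}p\!\left(1/\mu(z)\right)$. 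Comparing these expressions yields $q_{\mu}(-z)=(-1)^{n}\epsilon\,q_{\mu}^{\star}(z^{\star})$. Substituting $z\mapsto-z$ in the identity just derived for $q_{\mu}^{\star}(z^{\star})$ (legitimate, both sides being polynomials of degree $n$) and using $\mu(-z)=1/\mu(z)$ once more gives the companion $q_{\mu}^{\star}\!\left((-z)^{\star}\right)=(-1)^{n}\epsilon^{-1}q_{\mu}(z)$. Multiplying the two,
\[
  Q(-z)=q_{\mu}(-z)\,q_{\mu}^{\star}\!\left((-z)^{\star}\right)=\bigl[(-1)^{n}\epsilon\,q_{\mu}^{\star}(z^{\star})\bigr]\bigl[(-1)^{n}\epsilon^{-1}q_{\mu}(z)\bigr]=q_{\mu}(z)\,q_{\mu}^{\star}(z^{\star})=Q(z),
\]
since $(-1)^{2n}\epsilon\,\epsilon^{-1}=1$ for every $n$. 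Hence $Q$ is even, which together with $\deg Q=2n$ and its reality completes the proof.

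The only real obstacle I foresee is notational bookkeeping: the paper's $\star$ plays two roles --- conjugating coefficients and conjugating arguments --- and one must keep these straight while tracking the $(-1)^{n}$ and $\epsilon^{\pm1}$ factors, which cancel in the product precisely because $Q$ pairs $q_{\mu}$ with its coefficient-conjugate. A zero-theoretic alternative is available: the zeros of $q_{\mu}$ are $\omega(\zeta_{k})$, the multiset $\{\zeta_{k}\}$ is closed under conjugation, and a short computation from (\ref{Cayley}) gives $\omega(\zeta^{\star})=-\omega(\zeta)^{\star}$, so the zeros of $q_{\mu}$ are symmetric about the imaginary axis and those of $Q=q_{\mu}(z)q_{\mu}^{\star}(z^{\star})$ come in pairs $\{\xi,-\xi\}$; but the functional-equation route is cleaner and automatically accounts for multiplicities and the leading coefficient.
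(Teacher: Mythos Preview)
Your proof is correct and rests on the same two identities the paper uses --- self-conjugacy $p=\epsilon\,p^{\star}(\cdot^{\star})$ together with $\mu^{\star}(z^{\star})=1/\mu(z)=\mu(-z)$ --- to conclude $Q(-z)=Q(z)$. The paper's version is marginally more direct: it writes $Q(z)=\epsilon^{-1}(z^{2}+1)^{n}p(\mu(z))\,p(\mu(-z))$ in one line and reads off the evenness by inspection, rather than first isolating the intermediate functional equation $q_{\mu}(-z)=(-1)^{n}\epsilon\,q_{\mu}^{\star}(z^{\star})$ as you do.
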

\begin{proof}
According to (\ref{q_mu}), we have that, \sloppy $Q(z)=q_{\mu}(z)q_{\mu}^{\star}(z^{\star})=(z^{2}+1)^{n}p(\mu(z))p^{\star}(\mu(z^{\star}))$,
which is clearly a real polynomial. If, moreover, $p(z)$ is self-conjugate,
then we get that $Q(z)=\epsilon^{-1}(z^{2}+1)^{n}p(\mu(z))p(\mu^{\star}(z^{\star})).$
But it follows from (\ref{Cayley}) that $\mu^{\star}(z^{\star})=1/\mu(z)=\mu(-z)$,
so that we obtain $Q(z)=\epsilon^{-1}(z^{2}+1)^{n}p(\mu(z))p\left(\mu(-z)\right)$.
Thus, we plainly see that $Q(-z)=Q(z)$, from which we conclude that
$Q(z)$ has only even powers of $z$.
\end{proof}
Hence, provided that $z=0$ is not a zero of $q_{\mu}(z)$ --- which
is the same of saying that $z=-1$ is not a zero of $p(z)$ ---,
the number of real zeros of $q_{\mu}(z)$ will be twice the number
of the positive zeros of $Q\left(\surd z\right)$, counted without
multiplicity. This property allows us to modify Algorithm \ref{AlgComplex1A}
by replacing $Q(z)$ with $Q\left(\surd z\right)$ and the procedure
$\textsc{rrc}[Q(z),-\infty,\infty]$ with $2\,\textsc{rrc}[Q\left(\surd z\right),0,\infty]$,
so that a faster algorithm for self-conjugate polynomials is achieved
(because now $Q\left(\surd z\right)$ has degree $n$ instead of $2n$).
Notice, however, that the eventual zero of $p(z)$ at $z=-1$ should
be counted separately, in the same fashion as the eventual zero of
$p(z)$ at the point $z=1$. This is exemplified in Algorithm \ref{AlgConjugate}.
We should remark, however, that this algorithm is not suitable for
counting the number of zeros that a self-conjugate polynomial $p(z)$
of degree $n$ has in a finite interval $\mathcal{J}=(\mathrm{e}^{i\alpha},\mathrm{e}^{i\beta}]$
of $\mathbb{S}$ because the change of variable $z\leftarrow\surd z$
is not a one-to-one map. In fact, in this case we can no longer guarantee
that the number of zeros that $Q(z)$ has on this interval corresponds
to the twice the number of zeros of $Q(\surd z)$ in the respective
positive interval of the real line. 

\vspace{0.5cm}
\begin{algorithm} 
\setcounter{algocf}{3}
\caption{ \textsc{Self-adjoint polynomials}} 
\label{AlgAdjoint}
\SetKwInOut{Input}{input} \SetKwInOut{Output}{output}
\Input{ A self-adjoint polynomial $p(z)$ of degree $n$.}
\Output{ The number of distinct zeros of $p(z)$ on the unit circle.}

$n\coloneqq \mathrm{degree}(p(z))$\;
$q(z)\coloneqq\left(z+i\right)^{n} p\left(\frac{z-i}{z+i}\right)$\;
$N\coloneqq \textsc{rrc}\left[q(z),-\infty,\infty \right]$\;
\lIf{$p(1)=0$}{$N \leftarrow N+1$ \textbf{end}}
\Return{$N$}.
\end{algorithm}
\vspace{0.5cm}

The another possibility is to define $Q(z)$ through \sloppy $Q(z)=\textsc{gcd}\left[\frac{1}{2}\left(q_{\mu}(z)+q_{\mu}^{\star}(z^{\star})\right),\frac{1}{2i}\left(q_{\mu}(z)-q_{\mu}^{\star}(z^{\star})\right)\right]$.
This has the advantage of providing a real polynomial $Q(z)$ whose
degree is at most $n$. In fact, we have the following:
\begin{thm}
Let $p(z)$ be a self-conjugate polynomial of degree $n$. Then, the
degree of the polynomial $Q(z)=\textsc{gcd}\left[\frac{1}{2}\left(q_{\mu}(z)+q_{\mu}^{\star}(z^{\star})\right),\frac{1}{2i}\left(q_{\mu}(z)-q_{\mu}^{\star}(z^{\star})\right)\right]$
will match the number of zeros of $p(z)$ that are symmetric to $\mathbb{S}$. 
\end{thm}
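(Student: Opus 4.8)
The plan is to combine Theorem~\ref{TheoremZerosGCD} with the zero--correspondence furnished by Theorems~\ref{TheoremRoots}, \ref{TheoremNumberRoots} and \ref{TheoremNumberSymmetric}. By Theorem~\ref{TheoremZerosGCD}, the degree of $Q(z)$ equals the number of zeros of $q_{\mu}(z)$ whose complex conjugate is again a zero of $q_{\mu}(z)$, counted with multiplicity; so it is enough to show that this number coincides with the number of zeros of $p(z)$ that are symmetric to $\mathbb{S}$. As usual one first disposes of the point $z=1$: if $p(1)\neq 0$ we proceed as below, and if $p(1)=0$ with multiplicity $m$ one writes $p(z)=(z-1)^{m}\tilde{p}(z)$ with $\tilde{p}(1)\neq 0$ (still a self-conjugate polynomial), notes from (\ref{q_mu}) that $q_{\mu}(z)$ is unchanged for $\tilde{p}(z)$ up to a nonzero constant factor, and adds the $m$ zeros at $z=1\in\mathbb{S}$ to the count separately, exactly as in the algorithms of the previous sections. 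Thus we may assume $p(1)\neq 0$.

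Under this assumption Theorem~\ref{TheoremRoots} identifies the zeros of $q_{\mu}(z)$ with $\omega(\zeta_{1}),\dots,\omega(\zeta_{n})$, where $\zeta_{1},\dots,\zeta_{n}$ are the zeros of $p(z)$ listed with multiplicity; since $\omega$ is a M\"obius transformation, it is a bijection of $\mathbb{C}_{\infty}$ with inverse $\mu$, and so it carries a zero $\zeta\neq 1$ of $p(z)$ of multiplicity $\nu$ to a zero $\omega(\zeta)$ of $q_{\mu}(z)$ of the same multiplicity. Two facts then drive the proof: $\omega$ maps $\mathbb{S}$ bijectively onto $\mathbb{R}$, so $\omega(\zeta)$ is real precisely when $\zeta\in\mathbb{S}$; and the identity $\omega(1/\zeta^{\star})=\omega^{\star}(\zeta)=[\omega(\zeta)]^{\star}$ established in the proof of Theorem~\ref{TheoremNumberSymmetric} shows that $[\omega(\zeta)]^{\star}$ is a zero of $q_{\mu}(z)$ if and only if $1/\zeta^{\star}$ is a zero of $p(z)$, and that these two zeros have equal multiplicities. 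Consequently a zero $\zeta$ of $p(z)$ contributes to $\deg Q(z)$ exactly when $1/\zeta^{\star}$ is also a zero of $p(z)$ --- that is, exactly when $\zeta$ is symmetric to $\mathbb{S}$ (the case $|\zeta|=1$ being included, with $\zeta=1/\zeta^{\star}$ and $\omega(\zeta)\in\mathbb{R}$) --- and in that case the multiplicity with which $\omega(\zeta)$ is counted towards $\deg Q(z)$, namely the minimum of the multiplicities of $\omega(\zeta)$ and $[\omega(\zeta)]^{\star}$ in $q_{\mu}(z)$, equals the minimum of the multiplicities of $\zeta$ and $1/\zeta^{\star}$ in $p(z)$, which is exactly the contribution of $\zeta$ to the number of $\mathbb{S}$-symmetric zeros. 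Summing over the zeros of $p(z)$ gives the claim.

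The step I expect to be the main obstacle is making the multiplicity bookkeeping of the last paragraph rigorous: one must verify that a zero $\xi$ of $q_{\mu}(z)=r(z)+is(z)$ contributes to $\deg\gcd[r(z),s(z)]$ with multiplicity exactly the minimum of the multiplicities of $\xi$ and of $\xi^{\star}$ in $q_{\mu}(z)$. The crucial observation here is that if the Taylor coefficients of $q_{\mu}(z)$ and of $q_{\mu}^{\star}(z^{\star})$ cancel to some order in $r(z)=\tfrac{1}{2}\bigl(q_{\mu}(z)+q_{\mu}^{\star}(z^{\star})\bigr)$, then they cannot also cancel to that order in $s(z)=\tfrac{1}{2i}\bigl(q_{\mu}(z)-q_{\mu}^{\star}(z^{\star})\bigr)$, so the common order of vanishing of $r(z)$ and $s(z)$ at $\xi$ is indeed the said minimum. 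The rest is a direct matching of the two enumerations through $\omega$. I note that self-conjugacy of $p(z)$ is not strictly needed for the degree identity itself, but in the self-conjugate case the $\mathbb{S}$-symmetric zeros additionally arrange themselves into conjugate pairs (and into quadruples when they lie off $\mathbb{R}\cup\mathbb{S}$), which is the reason the result appears in this section.
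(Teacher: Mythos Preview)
Your argument follows exactly the same route as the paper's proof --- combine Theorem~\ref{TheoremZerosGCD} with the zero-correspondence of Theorems~\ref{TheoremNumberRoots} and~\ref{TheoremNumberSymmetric} via the identity $\omega(1/\zeta^{\star})=[\omega(\zeta)]^{\star}$ --- only you spell out the multiplicity bookkeeping and the $p(1)=0$ reduction that the paper leaves implicit, and you correctly observe that the self-conjugacy hypothesis is not actually used in the degree identity itself. The approaches are essentially identical.
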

\begin{proof}
We have seen in Theorems \ref{TheoremNumberRoots} and \ref{TheoremNumberSymmetric}
that any pair of zeros of $p(z)$ that are on, or are symmetric to,
the unit circle are mapped into a pair of real, or non-real complex
conjugate, zeros of $q_{\mu}(z)$. On the other hand, Theorem \ref{TheoremZerosGCD}
states that the zeros of the polynomial $Q(z)$ as defined above are
precisely the complex conjugate zeros of $q_{\mu}(z)$. These two
assertions imply that the degree of $Q(z)$ equals the number of zeros
of $p(z)$ that are symmetric to $\mathbb{S}$.
\end{proof}
The corresponding algorithm is the same as Algorithm \ref{AlgComplex1B}
and does not need to be presented again. We highlight, nevertheless,
that if all the zeros of a self-conjugate polynomial $p(z)$ which
not lie on $\mathbb{S}$ are not symmetric to $\mathbb{S}$ either,
then the degree of $Q(z)$ provides directly the number of zeros of
$p(z)$ on $\mathbb{S}$, so that in this case there is no need of
using any \noun{rrc} method whatsoever. 

\section{Self-inversive, self-adjoint and skew-adjoint polynomials}

In this section we shall consider the case of a complex polynomial
$p(z)$ whose zeros are all symmetric with respect to the unit circle.
This means that, for any zero $\zeta$ of $p(z)$, the complex number
$1/\zeta^{\star}$ is also a zero of it. Any polynomial of this kind
is called a \emph{self-inversive polynomial} and the necessary and
sufficient condition for a polynomial $p(z)=p_{n}z^{n}+\cdots+p_{0}$
of degree $n$ to be self-inversive is that $p_{n}p_{0}\neq0$ and
that there exists a complex number $\epsilon$ with modulus $1$ such
that $p(z)=\epsilon z^{n}p^{\star}\left(1/z^{\star}\right)$ ---
see \citep{Vieira2019} for the proof. The coefficients of any self-inversive
polynomial $p(z)$ of degree $n$ satisfy the properties $p_{n-k}=\epsilon p_{k}^{\star}$,
for $k$ ranging from $0$ to $n$. If a given polynomial is self-inversive
with $\epsilon=1$ (resp. $\epsilon=-1$) we shall call it a \emph{self-adjoint
}(\emph{skew-adjoint}) \emph{polynomial}. 

\vspace{0.5cm}
\begin{algorithm} 
\caption{\textsc{Self-inversive polynomials}} 
\label{Alg-inv}
\SetKwInOut{Input}{input} \SetKwInOut{Output}{output}
\Input{ A self-inversive polynomial $p(z)$ of degree $n$.}
\Output{ The number of distinct zeros of $p(z)$ on the unit circle.}
$n\coloneqq \mathrm{degree}(p(z))$\;
$\epsilon \coloneqq p_n/p_0^{\star}$\;
\lIf{$\epsilon\neq1$}{$p(z)\leftarrow p\left(z \epsilon^{-1/n}\right)$ \textbf{end}}
$q(z)\coloneqq\left(z+i\right)^{n} p\left(\frac{z-i}{z+i}\right)$\;
$N\coloneqq\textsc{rrc}\left[q(z),-\infty,\infty \right]$\;
\lIf{$p(1)=0$}{$N \leftarrow N+1$ \textbf{end}}
\Return{$N$.}
\end{algorithm}
\vspace{0.5cm}

The following theorem shows that there is a one-to-one correspondence
between the sets of self-inversive and self-conjugate polynomials,
as well as between the sets of self-adjoint and real polynomials.
\begin{thm}
\label{Theorem-SI-SC}Let $p(z)$ be a self-inversive polynomial.
Then, the transformed polynomial $q_{\mu}(z)$ defined in (\ref{q_mu})
will be a self-conjugate polynomial. Moreover, if $p(z)$ is a self-adjoint
(resp. skew-adjoint) polynomial, then the transformed polynomial $q_{\mu}(z)$
will be a real (imaginary) polynomial. Similarly, let $p(z)$ be a
self-conjugate polynomial. Then the polynomial $q_{\omega}(z)$ defined
in (\ref{q_mu}) will be a self-inversive polynomial and if $p(z)$
is a real (imaginary) polynomial, then $q_{\omega}(z)$ will be a
self-adjoint (skew-adjoint) polynomial.
\end{thm}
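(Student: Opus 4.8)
The plan is to verify each of the four assertions by direct computation with the defining functional equations of self-inversive and self-conjugate polynomials, using the key identities for the Cayley transformations that have already appeared in the proofs of Theorems \ref{TheoremNumberSymmetric} and \ref{TheoremSC}, namely $\mu^{\star}(z^{\star})=1/\mu(z)=\mu(-z)$ and the analogous relation $\omega^{\star}(z^{\star})=1/\omega(z)$ together with $\omega(\mu(z))=z$. I would begin with the first statement. Suppose $p(z)$ is self-inversive, so $p(z)=\epsilon z^{n}p^{\star}(1/z^{\star})$ with $|\epsilon|=1$. I would compute $q_{\mu}^{\star}(z^{\star})$ from the definition $q_{\mu}(z)=(z+i)^{n}p(\mu(z))$: conjugating gives $q_{\mu}^{\star}(z^{\star})=(z-i)^{n}p^{\star}(\mu^{\star}(z^{\star}))$, and then substituting $\mu^{\star}(z^{\star})=1/\mu(z)$ and invoking the self-inversive relation for $p$ at the point $\mu(z)$ should, after collecting the factors $(z+i)^{n}$ and $(z-i)^{n}$ and the power $\mu(z)^{n}$, produce $q_{\mu}^{\star}(z^{\star})=\epsilon\, q_{\mu}(z)$ up to a unimodular constant. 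By the criterion for self-conjugacy recalled in Section 4 (the existence of a unimodular $\epsilon$ with $p(z)=\epsilon p^{\star}(z^{\star})$), this shows $q_{\mu}(z)$ is self-conjugate. When $\epsilon=1$ the relation becomes $q_{\mu}(z)=q_{\mu}^{\star}(z^{\star})$, i.e.\ $q_{\mu}$ is real; when $\epsilon=-1$ it becomes $q_{\mu}(z)=-q_{\mu}^{\star}(z^{\star})$, i.e.\ $q_{\mu}$ is purely imaginary. This settles the self-adjoint and skew-adjoint cases.

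For the converse statements I would argue symmetrically, starting from a self-conjugate $p(z)=\epsilon p^{\star}(z^{\star})$ and the definition $q_{\omega}(z)=(i/2)^{n}(z-1)^{n}p(\omega(z))$. Conjugating and using $\omega^{\star}(z^{\star})=1/\omega(z)$ should convert the self-conjugacy of $p$ into the self-inversive functional equation $q_{\omega}(z)=\epsilon\, z^{n}q_{\omega}^{\star}(1/z^{\star})$ for $q_{\omega}$; one has to be a little careful tracking the factor $(z-1)^{n}$ versus $(1/z-1)^{n}$ and the constant $(i/2)^{n}$, but these recombine to give exactly the power $z^{n}$ demanded by the self-inversive criterion. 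Then $\epsilon=1$ (real $p$) yields a self-adjoint $q_{\omega}$ and $\epsilon=-1$ (imaginary $p$) a skew-adjoint $q_{\omega}$. As an alternative route for the converse, one could simply note that $q_{\omega}$ is the inverse transformation of $q_{\mu}$ (this was the purpose of the constant $(i/2)^{n}$), so that applying the already-proved direct statement to $q_{\omega}$ in place of $p$ and using that $\mu$ and $\omega$ are mutually inverse gives the claim; I would mention this but carry out the explicit computation as the primary argument since it is short.

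The main obstacle is purely bookkeeping: keeping straight the several unimodular and real constants — the powers $(z\pm i)^{n}$, $(z-1)^{n}$, $(i/2)^{n}$, the factor $\mu(z)^{n}$ arising from the self-inversive relation, and the interchange $\mu^{\star}(z^{\star})=1/\mu(z)=\mu(-z)$ — and checking that they combine to leave precisely the factor $z^{n}$ (or none) required by the relevant symmetry criterion, with the \emph{same} $\epsilon$ rather than $\epsilon^{\star}=\bar\epsilon$ appearing. Since $|\epsilon|=1$ these coincide, so no genuine difficulty arises, but the signs in the skew cases must be tracked honestly. I would present the computation for $q_{\mu}$ in full and then remark that the computation for $q_{\omega}$ is entirely analogous (or follows by inversion), rather than repeating it verbatim.
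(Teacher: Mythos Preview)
Your approach is essentially identical to the paper's: both proofs verify the functional equations $q_{\mu}(z)=\epsilon\,q_{\mu}^{\star}(z^{\star})$ and $q_{\omega}(z)=\epsilon\,z^{n}q_{\omega}^{\star}(1/z^{\star})$ by a direct computation that substitutes the defining relation of $p$ and then uses the Cayley identities to rewrite the transformed argument. The only slip is the stated identity for $\omega$: one has $\omega^{\star}(z^{\star})=-\omega(z)$, not $1/\omega(z)$; the identity actually needed (and used in the paper) is $\omega^{\star}(z)=\omega(1/z^{\star})$, which is what converts the self-conjugacy of $p$ into the factor $z^{n}q_{\omega}^{\star}(1/z^{\star})$ in the self-inversive relation for $q_{\omega}$. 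With that correction your computation goes through verbatim and matches the paper's proof line by line.
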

\begin{proof}
The result follows from Theorems \ref{TheoremNumberRoots} and \ref{TheoremNumberSymmetric}.
Explicitly, we have the following: Let us first suppose $p(z)$ self-inversive.
Then, $q_{\mu}(z)=\left(z+i\right)^{n}p(\mu(z))=\epsilon\left(z+i\right)^{n}\mu(z)^{n}p^{\star}\left(1/\mu^{\star}\left(z\right)\right)=\epsilon\left(z-i\right)^{n}p^{\star}\left(1/\mu^{\star}\left(z\right)\right).$
But we have the identity $1/\mu^{\star}(z)=\mu(z^{\star})$, from
which $q_{\mu}(z)$ simplifies to $q_{\mu}(z)=\epsilon\left(z-i\right)^{n}p^{\star}\left(\mu\left(z^{\star}\right)\right)=\epsilon q_{\mu}^{\star}(z^{\star})$;
this proves that $q_{\mu}(z)$ is self-conjugate. Besides, notice
that if $\epsilon=1$ (resp. $\epsilon=-1$), so that $p(z)$ is self-adjoint
(skew-adjoint) polynomial, then we get that $q_{\mu}(z)$ will be
a real (imaginary) polynomial because the value of $\epsilon$ is
preserved during this transformation. Now, suppose $p(z)$ a self-conjugate
polynomial. Then, we get that $q_{\omega}(z)=\left(\tfrac{i}{2}\right)^{n}\left(z-1\right)^{n}p(\omega(z))=\epsilon\left(z-1\right)^{n}p^{\star}(\omega^{\star}(z)).$
But we have the identity $\omega^{\star}\left(z\right)=\omega(1/z^{\star})$,
from which we obtain $q_{\omega}(z)=\epsilon\left(\tfrac{i}{2}\right)^{n}\left(z-1\right)^{n}p^{\star}\left(\omega\left(1/z^{\star}\right)\right)=\epsilon z^{n}q_{\omega}^{\star}\left(1/z^{\star}\right)$;
this proves that $p(z)$ is self-inversive. Moreover, if $\epsilon=1$
(resp. $\epsilon=-1$), so that $p(z)$ is a real (imaginary) polynomial,
then we see that $q_{\omega}(z)$ becomes a self-adjoint (skew-adjoint)
polynomial. 
\end{proof}
Now, let us see how we can count the number of zeros that a self-adjoint
or a self-inversive polynomial has on the unit circle. Let us first
consider the case where $p(z)$ is self-adjoint. In this case, Theorem
\ref{Theorem-SI-SC} ensures that $q_{\mu}(z)$ is already a real
polynomial, so that there is no need of computing the polynomials
$Q(z)$ introduced in Section \ref{Section-GeneralComplex}. This
results in Algorithm \ref{AlgAdjoint}, which is usually faster than
Algorithms \ref{AlgComplex1A} and \ref{AlgComplex1B}, as the steps
concerning the computation of $Q(z)$ are absent. Of course, this
algorithm also works for skew-adjoint polynomials: we just have to
make the additional replacement $q_{\mu}(z)\leftarrow iq_{\mu}(z)$. 

Let us now suppose $p(z)$ self-inversive with $\epsilon\neq1$. In
this case, the transformed polynomial $q_{\mu}(z)$ is not real anymore.
A direct approach to work around this issue would be to compute the
real polynomial $Q(z)$ as introduced in Section \ref{Section-GeneralComplex},
but we actually have a better alternative: as the next theorem shows,
a self-inversive polynomial $p(z)$ can always be transformed into
a self-adjoint polynomial $s(z)$, whose degree is the same as that
of $p(z)$, through a simple change of variable:
\begin{thm}
\label{Theorem-Rotation}Let $p(z)$ be a self-inversive polynomial
of degree $n$ such that  $\epsilon\neq1$. Then, there exist $n$
values for the real variable $\phi$ in the interval $0<\phi\leqslant2\pi$
for which the composition $s(z)=p(\mathrm{e}^{i\phi}z)$ will provide
a self-adjoint polynomial of degree $n$. The possible values of $\phi$
are related with $\epsilon$ through the formula $\phi=\left(i\log\epsilon-2\pi k\right)/n$,
for $1\leqslant k\leqslant n$, such that $\epsilon=\mathrm{e}^{-in\phi}$
for any admissible value of $\phi$. Conversely, if $s(z)$ is a self-adjoint
polynomial of degree $n$, then $p(z)=s\left(z\mathrm{e}^{-i\phi}\right)$
will provide a self-inversive polynomial of degree $n$ such that
$\epsilon=\mathrm{e}^{in\phi}$.
\end{thm}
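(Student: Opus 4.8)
The plan is to translate the self-inversivity condition $p(z) = \epsilon z^n p^\star(1/z^\star)$ directly into a condition on the coefficients and then read off when the rotated polynomial $s(z) = p(\mathrm{e}^{i\phi} z)$ becomes self-adjoint (i.e.\ satisfies the same condition with $\epsilon' = 1$). First I would recall from the discussion preceding the theorem that a polynomial $p(z) = p_n z^n + \cdots + p_0$ of degree $n$ is self-inversive with parameter $\epsilon$ precisely when $p_{n-k} = \epsilon\, p_k^\star$ for $0 \leqslant k \leqslant n$; in particular $|\epsilon| = 1$ since applying the relation twice gives $p_k = \epsilon \epsilon^\star p_k$ and $p_0 \neq 0$. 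Writing $s(z) = \sum_{k=0}^n s_k z^k$ with $s_k = p_k \mathrm{e}^{ik\phi}$, one computes
\begin{equation}
s_{n-k} = p_{n-k}\,\mathrm{e}^{i(n-k)\phi} = \epsilon\, p_k^\star\, \mathrm{e}^{i(n-k)\phi} = \epsilon\, \mathrm{e}^{in\phi}\,\bigl(p_k\, \mathrm{e}^{ik\phi}\bigr)^\star = \bigl(\epsilon\, \mathrm{e}^{in\phi}\bigr)\, s_k^\star,
\end{equation}
so $s(z)$ is self-inversive with parameter $\epsilon' = \epsilon\,\mathrm{e}^{in\phi}$. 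Hence $s(z)$ is self-adjoint if and only if $\epsilon\,\mathrm{e}^{in\phi} = 1$, i.e.\ $\mathrm{e}^{in\phi} = \epsilon^{-1} = \epsilon^\star$.

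Next I would solve this equation for $\phi$ in the prescribed range. Writing $\epsilon = \mathrm{e}^{-i\theta}$ with $\theta = i\log\epsilon$ (a real number, since $|\epsilon|=1$; here $\log$ is any fixed branch), the equation $\mathrm{e}^{in\phi} = \mathrm{e}^{-i\theta}$ forces $n\phi \equiv -\theta \pmod{2\pi}$, that is $\phi = (-\theta - 2\pi k)/n = (i\log\epsilon - 2\pi k)/n$ for $k \in \mathbb{Z}$. Exactly $n$ of these solutions fall in a half-open interval of length $2\pi$, and by choosing $k$ to run over $1 \leqslant k \leqslant n$ (adjusting the branch of $\log$ if necessary) one lands in $0 < \phi \leqslant 2\pi$. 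For each such $\phi$ one has $\epsilon = \mathrm{e}^{-in\phi}$ by construction, and the computation above shows $s(z) = p(\mathrm{e}^{i\phi}z)$ is self-adjoint; it has degree $n$ because $s_n = p_n\,\mathrm{e}^{in\phi} \neq 0$.

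For the converse, I would reverse the same coefficient computation: if $s(z)$ is self-adjoint, so $s_{n-k} = s_k^\star$, then setting $p(z) = s(\mathrm{e}^{-i\phi}z)$ gives $p_k = s_k\,\mathrm{e}^{-ik\phi}$ and the identical algebra yields $p_{n-k} = \mathrm{e}^{in\phi}\,p_k^\star$, so $p(z)$ is self-inversive with $\epsilon = \mathrm{e}^{in\phi}$, again of degree $n$ since $p_n = s_n\,\mathrm{e}^{-in\phi}\neq 0$. I do not expect a serious obstacle here: the only point requiring a little care is the bookkeeping of the branch of the logarithm and the index range so that the $n$ admissible values of $\phi$ genuinely lie in $(0, 2\pi]$ and match the stated formula $\phi = (i\log\epsilon - 2\pi k)/n$ — essentially a matter of fixing conventions consistently. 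Everything else is a direct manipulation of the coefficient relations characterizing self-inversive and self-adjoint polynomials.
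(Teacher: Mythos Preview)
Your proposal is correct and follows essentially the same route as the paper's proof: write $s_k = p_k\,\mathrm{e}^{ik\phi}$, use the self-inversive coefficient relation $p_{n-k} = \epsilon\, p_k^\star$ to see that $s(z)$ is self-inversive with parameter $\epsilon' = \epsilon\,\mathrm{e}^{in\phi}$, set $\epsilon' = 1$ and solve for $\phi$, then reverse the computation for the converse. Your write-up is in fact slightly more explicit than the paper's in displaying the key identity $s_{n-k} = (\epsilon\,\mathrm{e}^{in\phi})\,s_k^\star$, and your caveat about the branch of the logarithm and the index range is appropriate.
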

\begin{proof}
Let $p(z)=p_{n}z^{n}+p_{n-1}z^{n-1}+\cdots+p_{1}z+p_{0}$ be a self-inversive
polynomial of degree $n$. Making the change of variable $z\leftarrow\mathrm{e}^{i\phi}z$,
we shall get another polynomial of degree $n$, $s(z)=s_{n}z^{n}+s_{n-1}z^{n-1}+\cdots+s_{1}z+s_{0}$,
where $s_{k}=p_{k}\mathrm{e}^{ik\phi}$, $0\leqslant k\leqslant n$.
Now, $p(z)$ is self-inversive so that its coefficients satisfy the
relations $p_{n-k}=\epsilon p_{k}^{\star}$, $0\leqslant k\leqslant n$.
From this we can see that the condition for $s(z)$ to be a self-adjoint
polynomial is that $\epsilon=\mathrm{e}^{-in\phi}.$ Inverting this
relation, we conclude that $\phi$ can assume $n$ distinct values
in the interval $0<\phi\leqslant2\pi$, which are determined by the
formula $\phi_{k}=\left(i\log\epsilon-2\pi k\right)/n$, for $0\leqslant k\leqslant n$;
each one of them leads to a particular self-adjoint polynomial $s^{(k)}(z)=p\left(z\mathrm{e}^{i\phi_{k}}\right).$
Notice that, in terms of $\epsilon$, we can also write: $s^{(k)}(z)=p\left(z\epsilon^{-1/n}/\varrho_{n}^{k}\right)$,
$1\leqslant k\leqslant n$, where $\varrho_{n}^{k}=\mathrm{e}^{2\pi i\frac{k}{n}}$
denotes the $k$th root of unity of degree $n$. Finally, given a
self-adjoint polynomial $s(z)$ of degree $n$, then it is clear that
$p(z)=s(\mathrm{e}^{-i\phi}z)$ will be a self-inversive polynomial
with $\epsilon=\mathrm{e}^{in\phi}$ for any admissible value $\phi_{k}$
of $\phi$, as above. 
\end{proof}
We highlight that Theorem \ref{Theorem-Rotation} means that any self-inversive
polynomial can be thought of a rotated self-adjoint polynomial. In
fact, if $\zeta_{1},\ldots,\zeta_{n}$ denote the zeros of a self-inversive
polynomial $p(z)$ of degree $n$, and $\sigma_{1}^{(j)},\ldots,\sigma_{n}^{(j)}$
the correspondent zeros of the self-adjoint polynomials $s^{(j)}=p(\mathrm{e}^{i\phi_{j}})$,
$1\leqslant j\leqslant n$, as provided by Theorem \ref{Theorem-Rotation},
then it is a straightforward matter to show that $\sigma_{k}^{(j)}=\mathrm{e}^{-i\phi_{j}}\zeta_{k}=\epsilon^{-1/n}\zeta_{k}/\varrho_{n}^{j}$,
for any $j$ and $k$ running from $1$ to $n$, where $\varrho_{n}^{j}=\mathrm{e}^{2\pi i\frac{j}{n}}$.
Therefore, we can say that the zeros of $s^{(j)}(z)$ are\emph{ rotated}
with respect to the zeros of $p(z)$ by an angle equal to $\epsilon^{-1/n}/\varrho_{n}^{j}$
in the clockwise direction. Theorem \ref{Theorem-Rotation} also shows
us that if we rotate the zeros of a given self-inversive polynomial
$p(z)$ of degree $n$ by an angle equal to any root of unity of degree
$n$, then we shall obtain another self-inversive polynomial with
the same $\epsilon.$ This means there are exactly $n$ self-inversive
polynomials conjugated in this way for the same value of $\epsilon$. 

Now, Theorem \ref{Theorem-Rotation} enables us to implement a specific
algorithm for counting the number of zeros that a self-inversive polynomial
has on $\mathbb{S}$. This is described in Algorithm \ref{Alg-inv}.
Because it differs from Algorithm \ref{AlgAdjoint} only by the replacement
$p(z)\leftarrow p\left(z\mathrm{e}^{-1/n}\right)$ (we can choose
$\varrho_{n}^{k}=1$), both algorithms have essentially the same complexity.

\section{Self-reciprocal and skew-reciprocal polynomials. An application to
Salem polynomials}

As the last case to be discussed in this work, let us suppose the
possibility of a complex polynomial $p(z)$ of degree $n$ which is,
at the same time, self-conjugate and self-inversive. From the properties
$p(z)=\epsilon p^{\star}\left(z^{\star}\right)$ and $p(z)=z^{n}\epsilon p^{\star}\left(1/z^{\star}\right)$
which should be satisfied by self-conjugate and self-inversive polynomials,
respectively, it follows that such polynomials should satisfy the
property: $p(z)=\epsilon z^{n}p\left(1/z\right).$ Contrary to the
previous cases, however, $\epsilon$ can assume only the values $1$
or $-1$ (to see why, replace $z$ by $1/z$ in the formula above),
which means that $p(z)$ must be a real polynomial. In the first case
$\left(\epsilon=1\right)$ we say that $p(z)$ is a \emph{self-reciprocal
polynomial}, while in the second case $\left(\epsilon=-1\right)$,
$p(z)$ is often called a \emph{skew-reciprocal polynomial}. The coefficients
of any self-reciprocal and any skew-reciprocal polynomial satisfy,
respectively, the relations $p_{n-k}=p_{k}$ and $p_{n-k}=-p_{k}$,
for any $k$ from $0$ to $n$, see \citep{Vieira2019}. 

\vspace{0.5cm}
\begin{algorithm}
\caption{\textsc{Self-reciprocal or skew-reciprocal  polynomials}}
\label{AlgReciprocal}
\SetKwInOut{Input}{input} \SetKwInOut{Output}{output}
\Input{
A self-reciprocal or skew-reciprocal polynomial $p(z)$ of degree $n$.}
\Output{The number of  distinct zeros of $p(z)$ on the unit circle.}
$N \coloneqq 0$\;
\lIf{$p( 1)=0$}{$N \leftarrow N+1$ \textbf{end}}
\lIf{$p(-1)=0$}{$N \leftarrow N+1$ \textbf{end}} 
\lWhile{$p( 1)=0$}{$p(z) \leftarrow \frac{p(z)}{z-1}$ \textbf{end}}
\lWhile{$p(-1)=0$}{$p(z) \leftarrow \frac{p(z)}{z+1}$ \textbf{end}}
$n \coloneqq \mathrm{degree}(p(z))$\;
$q(z) \coloneqq \left( \surd z + i \right)^{n} p \left(\frac{\surd z - i}{\surd z + i}\right)$\;
$N \leftarrow N + \textsc{rrc}\left[q(z),-\infty,\infty \right]$\;
\Return{$N$.}
\end{algorithm}
\vspace{0.5cm}

The behaviour of self-reciprocal and skew-reciprocal polynomials at
$z=\pm1$ is described in the following 
\begin{thm}
\label{TheoremZerosSelfSkew}If $z=1$ is a zero of a self-reciprocal
polynomial $p(z)$, then its multiplicity is even. Moreover, if $z=-1$
is a zero of a self-reciprocal polynomial $p(z)$ of degree $n$,
then its multiplicity is the same as the parity of $n$. Similarly,
if $z=1$ is a zero of a skew-reciprocal polynomial $p(z)$, then
its multiplicity is odd. Moreover, if $z=-1$ is a zero of a skew-reciprocal
polynomial $p(z)$ of degree $n$, then its multiplicity is the opposite
of the parity of $n$.
\end{thm}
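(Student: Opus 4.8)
The plan is to treat all four assertions at once. As recalled just above, both classes satisfy a single reciprocity relation $p(z)=\epsilon z^{n}p(1/z)$, with $\epsilon=1$ for self-reciprocal and $\epsilon=-1$ for skew-reciprocal polynomials, and $\epsilon\in\{1,-1\}$. Introduce a second sign $\delta\in\{1,-1\}$ to select the point $z=1$ or $z=-1$. Suppose $z=\delta$ is a zero of $p(z)$ of multiplicity $m\geqslant1$ and write $p(z)=(z-\delta)^{m}g(z)$, where $g$ is a polynomial of degree $n-m$ with $g(\delta)\neq0$.

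First I would substitute this factorization into the reciprocity relation. Using $\delta^{-1}=\delta$ one rewrites $1-\delta z=-\delta(z-\delta)$, whence $(1/z-\delta)^{m}=z^{-m}(-\delta)^{m}(z-\delta)^{m}$ and therefore $\epsilon z^{n}p(1/z)=\epsilon(-\delta)^{m}z^{n-m}(z-\delta)^{m}g(1/z)$. Equating this with $p(z)=(z-\delta)^{m}g(z)$ and cancelling the common factor $(z-\delta)^{m}$ yields the identity of rational functions $g(z)=\epsilon(-\delta)^{m}z^{n-m}g(1/z)$. This step is the whole content of the proof; what follows is bookkeeping.

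Next I would evaluate this identity at $z=\delta$ --- legitimate, since neither side has a pole there and $g(1/z)$ takes the value $g(\delta)$. Dividing out the nonzero factor $g(\delta)$ leaves $1=\epsilon(-\delta)^{m}\delta^{n-m}$, and since $(-\delta)^{m}\delta^{n-m}=(-1)^{m}\delta^{n}$ this reduces to the single relation $(-1)^{m}=\epsilon\,\delta^{n}$ (every quantity being $\pm1$). Specialising the two signs gives the four claims: $\epsilon=\delta=1$ forces $m$ even; $\epsilon=1$, $\delta=-1$ forces $(-1)^{m}=(-1)^{n}$, i.e. $m$ has the parity of $n$; $\epsilon=-1$, $\delta=1$ forces $m$ odd; and $\epsilon=-1$, $\delta=-1$ forces $(-1)^{m}=(-1)^{n+1}$, i.e. $m$ has the parity opposite to that of $n$.

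There is no genuine obstacle, so the only thing to be careful about is the sign arithmetic: the cancellation of $(z-\delta)^{m}$, the reduction $(-\delta)^{m}\delta^{n-m}=(-1)^{m}\delta^{n}$, and the identities $\epsilon^{-1}=\epsilon$, $\delta^{-n}=\delta^{n}$ for $\epsilon,\delta\in\{1,-1\}$. I would also note in passing that evaluating the bare relation $p(z)=\epsilon z^{n}p(1/z)$ at $z=\delta$ gives $(1-\epsilon\delta^{n})p(\delta)=0$, so that $p(\delta)=0$ automatically whenever $\epsilon\delta^{n}=-1$ --- in particular $p(1)=0$ for every skew-reciprocal polynomial and $p(-1)=0$ for every self-reciprocal polynomial of odd degree; in those cases the hypothesis ``$z=\pm1$ is a zero'' is vacuously met, consistently with the parity conclusion there forcing $m\geqslant1$.
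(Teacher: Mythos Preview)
Your proof is correct and handles all four cases with a single computation. The paper's own argument is closely related but organised differently: rather than deriving the functional equation for the quotient $g$ and evaluating at $z=\delta$, it uses the multiplicative behaviour of the reciprocity type --- noting that $(z-1)^{m}$ is self- or skew-reciprocal according as $m$ is even or odd, and that $(z+1)^{m}$ is always self-reciprocal --- and then argues by contradiction (e.g.\ if $z=1$ were a zero of odd multiplicity of a self-reciprocal $p$, the quotient would be skew-reciprocal with no zero at $z=1$, which is impossible). Your route is more uniform and slightly more economical, since the single sign identity $(-1)^{m}=\epsilon\,\delta^{n}$ encodes all four parity conclusions at once; the paper's route has the advantage of making explicit the structural facts about products of self-/skew-reciprocal factors, which are of independent interest. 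Both arguments ultimately hinge on the same factorisation $p(z)=(z-\delta)^{m}g(z)$ and the reciprocity relation, so the difference is one of packaging rather than substance.
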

\begin{proof}
First of all, notice that any skew-reciprocal polynomial $p(z)$ has
a zero at $z=1$, as well as, any self-reciprocal (resp. skew-reciprocal)
polynomial of odd (even) degree has a zero at $z=-1$. These propositions
follow directly from the evaluation of $p(z)$ at $z=\pm1$ and from
the symmetry of its coefficients. Now, suppose that $z=1$ is a zero
of a self-reciprocal polynomial $p(z)$ of odd multiplicity, say $m=2k+1$,
$k\in\mathbb{N}$. Then, $p(z)=\left(z-1\right)^{2k+1}P(z)$ and,
as $\left(z-1\right)^{2k+1}$ is a skew-reciprocal polynomial, it
follows that $P(z)$ would be a skew-reciprocal polynomial without
zeros at $z=1$, a contradiction. Similarly, suppose that $z=-1$
is a zero of $p(z)$ with multiplicity $m$. Then $p(z)=\left(z+1\right)^{m}P(z)$
and, as $\left(z+1\right)^{m}$ is self-reciprocal, it follows that
$P(z)$ is self-reciprocal without zeros at $z=-1$. Thus $P(z)$
must have even degree, which implies that $m$ has the same parity
as the degree of $p(z)$. Moreover, suppose that $z=1$ is a zero
of a skew-reciprocal polynomial $p(z)$ of even parity, say, $m=2k$,
$k\in\mathbb{N}$. Then, $p(z)=\left(z-1\right)^{2k}P(z)$, and as
$\left(z-1\right)^{2k}$ is self-reciprocal, it follows that $P(z)$
would be a skew-reciprocal polynomial without zeros at $z=1$, again
a contradiction. Similarly, suppose that $z=-1$ is a zero of $p(z)$
with multiplicity $m$. Then $p(z)=\left(z+1\right)^{m}P(z)$ and,
as $\left(z+1\right)^{m}$ is self-reciprocal, it follows that $P(z)$
is skew-reciprocal without zeros at $z=-1$. Thus $P(z)$ must have
odd degree, which implies that the parity of $m$ is opposed to the
parity of the degree of $p(z)$. 
\end{proof}
Besides, the action of the Cayley transformation (\ref{Cayley}) over
a self-reciprocal or skew-reciprocal polynomial $p(z)$ of degree
$n$ and with no zeros at $z=\pm1$ leads to a real polynomial $q_{\mu}(z)$
of degree $n$ in the variable $z^{2}$, as it is shown in the next
theorem:
\begin{thm}
\label{Theorem-Reciprocal}Let $p(z)$ be a self-reciprocal polynomial
of even degree, say $n=2m$, with no zeros at $z=\pm1$. Then, the
Cayley-transformed polynomial $q_{\mu}(z)$ will be a real polynomial
of degree $m$ in the variable $z^{2}$. 
\end{thm}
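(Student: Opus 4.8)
The plan is to deduce everything from the earlier structural theorems together with one short direct computation showing that $q_\mu(z)$ is an even function of $z$. First I would recall that a self-reciprocal polynomial is, by definition, a \emph{real} polynomial whose coefficients obey $p_{n-k}=p_k$; being real, this also reads $p_{n-k}=p_k^\star$, so it is self-inversive with $\epsilon=1$, i.e.\ self-adjoint. Theorem~\ref{Theorem-SI-SC} then gives, with no further work, that $q_\mu(z)$ is a real polynomial. Since $p(1)\neq0$, the first theorem of the paper tells us that $q_\mu(z)$ has degree exactly $n=2m$. So the only thing left to establish is that $q_\mu(z)$ involves only even powers of $z$; once that is known, writing $q_\mu(z)=\widetilde q(z^2)$ exhibits a real polynomial $\widetilde q$ of degree $n/2=m$, which is the assertion.

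For the parity I would compute $q_\mu(-z)$ directly from $q_\mu(z)=(z+i)^n p(\mu(z))$. The crucial identity is $\mu(-z)=(-z-i)/(-z+i)=(z+i)/(z-i)=1/\mu(z)$, already used elsewhere in the paper. Feeding the self-reciprocal relation in the form $p(1/w)=w^{-n}p(w)$ into this with $w=\mu(z)$, and using that $n$ is even so that $(i-z)^n=(z-i)^n$, one finds $q_\mu(-z)=(i-z)^n p(1/\mu(z))=(z-i)^n\mu(z)^{-n}p(\mu(z))=q_\mu(z)$, the last equality because $\mu(z)^{-n}=(z+i)^n/(z-i)^n$. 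Hence $q_\mu$ is even, and combined with its realness and the degree count above this proves the theorem.

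There is essentially no obstacle here; the only point requiring care is the bookkeeping with the M\"obius identity $\mu(-z)=1/\mu(z)$ and the evenness of $n$ --- and it is exactly there that the hypotheses enter: $p(1)\neq0$ fixes $\deg q_\mu=n$, while $p(-1)\neq0$ forces $n$ to be even (by Theorem~\ref{TheoremZerosSelfSkew}, a self-reciprocal polynomial of odd degree must vanish at $z=-1$), which is what makes $(i-z)^n=(z-i)^n$. As an alternative to the direct computation one could instead invoke Theorem~\ref{TheoremSC}: since $q_\mu$ is already real, the polynomial $Q(z)=q_\mu(z)q_\mu^\star(z^\star)=q_\mu(z)^2$ of that theorem is a polynomial in $z^2$; a perfect square having only even powers must be the square of an even or an odd polynomial, and the leading term $z^{2m}$ excludes the odd case, so $q_\mu$ is even --- the same conclusion.
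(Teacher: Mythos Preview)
Your proof is correct and takes a genuinely different route from the paper's. The paper proceeds by brute-force expansion: it writes $p(z)=p_m z^m+\sum_{k=0}^{m-1}p_k(z^{2m-k}+z^k)$ from the coefficient symmetry, substitutes $z\mapsto\mu(z)$ and multiplies by $(z+i)^{2m}$ to obtain the explicit formula
\[
q_\mu(z)=(z^2+1)^m p_m+\sum_{k=0}^{m-1}p_k(z^2+1)^k\bigl[(z+i)^{2m-2k}+(z-i)^{2m-2k}\bigr],
\]
from which both the reality and the evenness in $z$ can be read off by inspection. Your argument is more structural: reality comes for free from Theorem~\ref{Theorem-SI-SC} (self-reciprocal $\Rightarrow$ self-adjoint $\Rightarrow$ $q_\mu$ real), and evenness from the functional identity $\mu(-z)=1/\mu(z)$ combined with the self-reciprocal relation $p(1/w)=w^{-n}p(w)$, which together give $q_\mu(-z)=q_\mu(z)$ in two lines. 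This approach is cleaner, avoids any coefficient bookkeeping, and makes transparent exactly where the evenness of $n$ is used (namely $(i-z)^n=(z-i)^n$); the paper's explicit formula, on the other hand, has the minor advantage of displaying $q_\mu$ concretely, which could be useful for implementation. Your alternative via Theorem~\ref{TheoremSC} (that $q_\mu^2$ being a polynomial in $z^2$ forces $q_\mu$ even or odd, and degree $2m$ excludes odd) is also valid. One small remark: your comment that ``$p(-1)\neq0$ forces $n$ to be even'' is a correct observation but somewhat redundant here, since $n=2m$ is already an explicit hypothesis; the role of $p(-1)\neq0$ in the statement is not to secure even degree but rather to ensure $q_\mu(0)\neq0$ (though the conclusion as phrased does not actually require this).
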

\begin{proof}
Let $p(z)$ be a self-reciprocal polynomial of even degree, say, $n=2m$.
Because the coefficients of any self-reciprocal polynomial satisfy
the relations $p_{n-k}=p_{k}$, $0\leqslant k\leqslant n$, it follows
that $p(z)$ can be written as, $p(z)=p_{m}z^{m}+\sum_{k=0}^{m-1}p_{k}\left(z^{2m-k}+z^{k}\right)=z^{m}\left[p_{m}+\sum_{k=0}^{m-1}p_{k}\left(z^{m-k}+z^{k-m}\right)\right]$.
On the other hand, the transformed polynomial $q_{\mu}(z)$ defined
in (\ref{q_mu}) becomes, $q_{\mu}(z)=\left(z^{2}+1\right)^{m}p_{m}+\sum_{k=0}^{m-1}p_{k}\left(z^{2}+1\right)^{k}\left[\left(z+i\right)^{2m-2k}+\left(z-i\right)^{2m-2k}\right]$,
after a simplification. Therefore, we can plainly see that $q_{\mu}(z)$
is an even function of $z$, which means that $q(z)$ is in fact a
polynomial of degree $m$ on the variable $z^{2}$. Furthermore, $q_{\mu}(z)$
is also a real polynomial because all the imaginary terms inside the
brackets will cancel after we expand the binomials.
\end{proof}
We can also show through similar arguments that, if $p(z)$ is a skew-reciprocal
polynomial with no zero at $z=-1$, then the transformed polynomial
$q_{\mu}(z)$ will be a pure imaginary polynomial in the variable
$z^{2}$. Notice as well that any zero of $p(z)$ at $z=1$ is mapped
to infinity, as we have seen, so that the degree of $q_{\mu}(z)$
will be less than the degree of $p(z)$ in this case; similarly, any
zero of $p(z)$ at $z=-1$ is mapped to zero, so that $p(z)$ will
be multiplied by some power of $z$ in this case.

Theorems \ref{TheoremZerosSelfSkew} and \ref{Theorem-Reciprocal}
provide a great improvement to the algorithms presented in the previous
sections. In fact, to count the number of zeros that a self-reciprocal
or skew-reciprocal polynomial $p(z)$ has on $\mathbb{S}$, we can
first remove its zeros (if any) at the points $z=1$ and $z=-1$ by
dividing it successively by $z-1$ and $z+1$ so that a self-reciprocal
polynomial $r(z)$, with no zeros at $z=\pm1$ is obtained in place.
Then, we can compute the Cayley-transformation of $r(z)$ and, thanks
to Theorem \ref{Theorem-Reciprocal}, to make the replacement $z\leftarrow\surd z$,
which provides a polynomial of the half of the degree of $r(z)$.
The number of real zeros of $r(z)$ will, therefore, equal the number
of zeros on $\mathbb{S}$ of $p(z)$, excepting its possible zeros
at $z=\pm1$, which can be verified separately. This is described
in Algorithm \ref{AlgReciprocal} . 

Finally, notice that from Algorithm \ref{AlgReciprocal} we can easily
test if a given polynomial is (or not is) a \emph{Salem polynomial}
without knowing explicitly its zeros. A Salem polynomial $p(z)$ is
a monic, irreducible, self-reciprocal polynomial of degree $n\geqslant4$
with integer coefficients, whose all but two zeros lie on the unit
circle. Their two zeros not lying on $\mathbb{S}$ are necessarily
real, positive and the reciprocal of each other. The greatest real
zero of a Salem polynomial is called its \emph{Salem number}, and
if the value of this number is less than the \emph{smallest Pisot
number} $\varrho\approx1.3247179$ (which corresponds to the unique
real zero of the Pisot polynomial $p(z)=z^{3}-z-1$), then it is usually
called a \emph{small} \emph{Salem number}\footnote{Actually, a Salem number is often called small if it is less than
$1.3$, see \citep{Boyd1977,Mossinghoff1998}. We think, however,
that our definition is more appropriate, as the value $1.3$ is quite
arbitrary, in contrast with the smallest Pisot number $\varrho$ which
naturally plays an important role in the field. }\emph{. }A \emph{Pisot polynomial} is a monic, irreducible, non-self-reciprocal
integer polynomial that has only one zero outside the unit circle,
which is its Pisot number. Up to date there were found only 47 small
Salem numbers\footnote{Please notice, however, that with the alternative definition adopted
here, the list of known small Salem numbers presented for instance
in \citep{Boyd1977,Mossinghoff1998} would be increased by some few
new entries.}, the smallest one being the \emph{Lehmer number} $\lambda\approx1.1762808$,
the greatest real zero of the polynomial $L(z)=z^{10}+z^{9}-z^{7}-z^{6}-z^{5}-z^{4}-z^{3}+z+1$
--- see \citep{Boyd1977,Mossinghoff1998}. It is still an open problem
to know if Lehmer's number $\lambda$ is the smallest Salem number,
or even if there exists a smallest Salem number after all. We highlight
that Algorithm \ref{AlgReciprocal} provides a powerful tool to look
for polynomials with small Salem numbers and, in a more general way,
to search for polynomials with small Mahler measure. We in fact succeeded
in reproducing all small Salem numbers known up to date with an improved
form of Algorithm \ref{AlgReciprocal} running in a simple desktop
computer. We intend to report a detailed analysis of these researches
in the future.

\section*{Acknowledgements}

The author thanks the Editor and Referees for their valuable suggestions.
This work was supported by Coordination for the Improvement of Higher
Education Personnel (CAPES). 

This work is licensed under a CC-BY-NC-ND License. The final version
is published in the Journal of Computational and Applied Mathematics,\emph{
\url{https://doi.org/10.1016/j.cam.2020.113169}.}

\bibliographystyle{elsarticle-num}
\bibliography{RootsU}

\begin{thebibliography}{10}
\expandafter\ifx\csname url\endcsname\relax
  \def\url#1{\texttt{#1}}\fi
\expandafter\ifx\csname urlprefix\endcsname\relax\def\urlprefix{URL }\fi
\expandafter\ifx\csname href\endcsname\relax
  \def\href#1#2{#2} \def\path#1{#1}\fi

\bibitem{JeanClaudePadovani2009}
J.-C. Pont, F.~Padovani, Collected works of {Charles Fran{\c{c}}ois Sturm},
  Birkh{\"a}user Verlag, 2009.

\bibitem{Akritas1989}
A.~G. Akritas, Elements of computer algebra with applications, Vol.~3, Wiley
  New York, 1989.

\bibitem{Thomas1941}
J.~M. Thomas, Sturm's theorem for multiple roots, National Mathematics Magazine
  15~(8) (1941) 391--394.
\newblock \href {https://doi.org/10.2307/3028551} {\path{doi:10.2307/3028551}}.

\bibitem{Fourier1820}
J.~Fourier, Sur l'usage du th{\'e}or{\`e}me de {Descartes} dans la recherche
  des limites des racines, Bulletin des Sciences, par la Soci{\'e}t{\'e}
  Philomatique de Paris (1820) 156--165.

\bibitem{Vincent1834}
A.~J.~H. Vincent, M{\'e}moire sur la r{\'e}solution des {\'e}quations
  num{\'e}riques, M{\'e}moires de la Soci{\'e}t{\'e} Royale des Sciences, de
  l'Agriculture et des Arts de Lille 1 (1834) 1--34.

\bibitem{Vincent1836}
A.~J.~H. Vincent, Note sur la r{\'e}solution des {\'e}quations num{\'e}riques,
  Journal de Math{\'e}matiques Pures et Appliqu{\'e}es 1~(1) (1836) 341--372.

\bibitem{Budan1807}
F.-D. Budan, Nouvelle m{\'e}thode pour la r{\'e}solution des {\'e}quations
  num{\'e}riques d'un degr{\'e} quelconque, Courcier, 1807.

\bibitem{CollinsAkritas1976}
G.~E. Collins, A.~G. Akritas, Polynomial real root isolation using {Descarte's}
  rule of signs, in: Proceedings of the Third ACM Symposium on Symbolic and
  Algebraic Computation, ACM, 1976, pp. 272--275.
\newblock \href {https://doi.org/10.1145/800205.806346}
  {\path{doi:10.1145/800205.806346}}.

\bibitem{Akritas1978}
A.~G. Akritas, A new method for polynomial real root isolation, in: Proceedings
  of the 16th Annual Southeast Regional Conference, ACM, 1978, pp. 39--43.

\bibitem{AlesinaGaluzzi2000}
A.~Alesina, M.~Galuzzi, Vincent's theorem from a modern point of view,
  Rendiconti del Circolo Matematico di Palermo, Series 2 Suppl. 64 (2000)
  179--191.

\bibitem{RouillierZimmermann2004}
F.~Rouillier, P.~Zimmermann, Efficient isolation of polynomial's real roots,
  Journal of Computational and Applied Mathematics 162~(1) (2004) 33--50.
\newblock \href {https://doi.org/10.1016/j.cam.2003.08.015}
  {\path{doi:10.1016/j.cam.2003.08.015}}.

\bibitem{AkritasStrzebonskiVigklas2008}
A.~Akritas, A.~Strzebo{\'n}ski, P.~Vigklas, On the various bisection methods
  derived from {Vincent's} theorem, Serdica Journal of Computing 2~(1) (2008)
  89--104.

\bibitem{SagraloffMehlhorn2016}
M.~Sagraloff, K.~Mehlhorn, Computing real roots of real polynomials, Journal of
  Symbolic Computation 73 (2016) 46--86.
\newblock \href {https://doi.org/10.1016/j.jsc.2015.03.004}
  {\path{doi:10.1016/j.jsc.2015.03.004}}.

\bibitem{KobelEtal2016}
A.~Kobel, F.~Rouillier, M.~Sagraloff, Computing real roots of real
  polynomials... and now for real!, in: Proceedings of the ACM on International
  Symposium on Symbolic and Algebraic Computation, ACM, 2016, pp. 303--310.
\newblock \href {https://doi.org/10.1145/2930889.2930937}
  {\path{doi:10.1145/2930889.2930937}}.

\bibitem{Vieira2017}
R.~S. Vieira, On the number of roots of self-inversive polynomials on the
  complex unit circle, The Ramanujan Journal 42~(2) (2017) 363--369.
\newblock \href {https://doi.org/10.1007/s11139-016-9804-2}
  {\path{doi:10.1007/s11139-016-9804-2}}.

\bibitem{Ehrlich1967}
L.~W. Ehrlich, A modified {Newton} method for polynomials, Communications of
  the ACM 10~(2) (1967) 107--108.
\newblock \href {https://doi.org/10.1145/363067.363115}
  {\path{doi:10.1145/363067.363115}}.

\bibitem{Aberth1973}
O.~Aberth, Iteration methods for finding all zeros of a polynomial
  simultaneously, Mathematics of Computation 27~(122) (1973) 339--344.
\newblock \href {https://doi.org/10.1090/S0025-5718-1973-0329236-7}
  {\path{doi:10.1090/S0025-5718-1973-0329236-7}}.

\bibitem{Wilf1978}
H.~S. Wilf, A global bisection algorithm for computing the zeros of polynomials
  in the complex plane, J. ACM 25~(3) (1978) 415--420.
\newblock \href {https://doi.org/10.1145/322077.322084}
  {\path{doi:10.1145/322077.322084}}.

\bibitem{Mitsui1983}
T.~Mitsui, A graphical technique for nonlinear algebraic equations,
  International Journal of Computer Mathematics 13~(3-4) (1983) 245--261.
\newblock \href {https://doi.org/10.1080/00207168308803367}
  {\path{doi:10.1080/00207168308803367}}.

\bibitem{Bini1996}
D.~A. Bini, Numerical computation of polynomial zeros by means of {Aberth's}
  method, Numerical algorithms 13~(2) (1996) 179--200.
\newblock \href {https://doi.org/10.1007/BF02207694}
  {\path{doi:10.1007/BF02207694}}.

\bibitem{BrunettoEtal2000}
M.~C. Brunetto, D.~M. Claudio, V.~Trevisan, An algebraic algorithm to isolate
  complex polynomial zeros using {Sturm} sequences, Computers \& Mathematics
  with Applications 39~(3-4) (2000) 95--105.
\newblock \href {https://doi.org/10.1016/S0898-1221(99)00336-3}
  {\path{doi:10.1016/S0898-1221(99)00336-3}}.

\bibitem{Pan2002}
V.~Y. Pan, Univariate polynomials: nearly optimal algorithms for numerical
  factorization and root-finding, Journal of Symbolic Computation 33~(5) (2002)
  701--733.
\newblock \href {https://doi.org/10.1006/jsco.2002.0531}
  {\path{doi:10.1006/jsco.2002.0531}}.

\bibitem{Eeisermann2012}
M.~Eisermann, The fundamental theorem of algebra made effective: an elementary
  real-algebraic proof via {Sturm} chains, The American Mathematical Monthly
  119~(9) (2012) 715--752.
\newblock \href {https://doi.org/10.4169/amer.math.monthly.119.09.715}
  {\path{doi:10.4169/amer.math.monthly.119.09.715}}.

\bibitem{Vieira2019}
R.~S. Vieira, Polynomials with symmetric zeros, in: Polynomials -- Theory and
  Application, IntechOpen, 2019, pp. 1--19.
\newblock \href {https://doi.org/10.5772/intechopen.82728}
  {\path{doi:10.5772/intechopen.82728}}.

\bibitem{Kempner1916}
A.~Kempner, On equations admitting roots of the form $e^{i\theta}$, Tohoku
  Mathematical Journal, First Series 10 (1916) 115--117.

\bibitem{Kempner1935}
A.~Kempner, On the complex roots of algebraic equations, Bulletin of the
  American Mathematical Society 41~(12) (1935) 809--843.

\bibitem{Conrad2016}
K.~Conrad, Roots on a circle, Expository note (2016).

\bibitem{AkritasVigklas2010}
A.~G. Akritas, P.~S. Vigklas, Counting the number of real roots in an interval
  with {Vincent's} theorem, Bulletin Math{\'e}matique de la Soci{\'e}t{\'e} des
  Sciences Math{\'e}matiques de Roumanie (2010) 201--211.

\bibitem{Boyd1977}
D.~W. Boyd, Small {Salem} numbers, Duke Mathematical Journal 44~(2) (1977)
  315--328.
\newblock \href {https://doi.org/10.1215/S0012-7094-77-04413-1}
  {\path{doi:10.1215/S0012-7094-77-04413-1}}.

\bibitem{Mossinghoff1998}
M.~Mossinghoff, Polynomials with small {Mahler} measure, Mathematics of
  Computation of the American Mathematical Society 67~(224) (1998) 1697--1705.
\newblock \href {https://doi.org/10.1090/S0025-5718-98-01006-0}
  {\path{doi:10.1090/S0025-5718-98-01006-0}}.

\end{thebibliography}

\end{document}